\def\thethm{\thesection.\@arabic\c@thm}
\def\thelem{\thesection.\@arabic\c@thm}
\def\thecor{\thesection.\@arabic\c@thm}
\def\theprop{\thesection.\@arabic\c@thm}
\def\therem{\thesection.\@arabic\c@thm}
\newtheorem{thm}{Theorem}[section]
\newtheorem{lem}[thm]{Lemma}
\theoremstyle{definition}
\newtheorem{rem}[thm]{Remark}
\numberwithin{equation}{section}
\def\ds{\displaystyle}
\newcommand{\R}{{\mathbb{R}}}
\def\ol{\overline}
\def\sm{\setminus}
\def\bA{{\mathbf{A}}}
\def\bb{{\mathbf{b}}}
\def\bu{{\mathbf{u}}}
\def\bv{{\mathbf{v}}}
\def\bz{{\mathbf{z}}}
\def\bxi{{\mathbf{\Xi}}}
\def\m{{\mathfrak{m}}}
\def\e{{\mathrm{e}}}
\begin{document}

\title[Boundedness of Solutions to Coercive Systems]{Boundedness of Solutions to a Class of Coercive Systems with Morrey Data}

\author{Dian K. Palagachev}
\address{DMMM, Politecnico di Bari, 70 125 Bari, Italy}
\email{dian.palagachev@poliba.it}  

\author{Lubomira G. Softova}
\address{Department of Mathematics, University of Salerno, Salerno, Italy}
\email{lsoftova@unisa.it}  
   
\subjclass[2000]{Primary: 35J47, 35B65; Secondary: 35G50, 35J60, 35R05, 35J57, 49N60}

\date{\today}

\keywords{Quasilinear elliptic system; Coercive; Weak solution; Morrey space; Essential boundedness}

\begin{abstract}
We prove global essential boundedness of the weak solutions $\bu\in W^{1,p}_0(\Omega;\mathbb{R}^N)$ to the quasilinear system
$$
\mathrm{div\,}\big(\bA(x,\bu,D\bu)\big)= \bb(x,\bu,D\bu).
$$
The principal part $\bA(x,\bu,D\bu)$ of the differential operator is componentwise coercive and supports controlled growths with respect to $\bu$ and $D\bu,$ while the lower order term $\bb(x,\bu,D\bu)$ exhibits componentwise controlled gradient growth. The $x$-behaviour of the nonlinearities is governed in terms of Morrey spaces.
\end{abstract}

\maketitle

\section{Introduction}\label{sec1}

Solutions to various real world problems realize minimal energy of suitable nonlinear functionals, and the central problem of the Calculus of Variations is to get existence of such solutions and to study their qualitative properties such as multiplicity, symmetry, monotonicity, etc. In all these issues, it is the machinery of the nonlinear functional analysis which plays a crucial role. On the other hand, each minimizer of a variational
functional solves weakly the corresponding Euler--Lagrange equation/system and this fact
allows to employ the powerful theory of PDEs as an additional tool. The Euler--Lagrange equations are divergence form PDEs, usually elliptic
and nonlinear, and their weak solutions (the minimizers) possess some basic
minimal smoothness. The regularity theory of general (non necessary variational)
divergence form elliptic PDEs establishes how the smoothness of the data reflects 
on the regularity of the solution, already obtained under very general circumstances.
Once having better smoothness, powerful tools of functional analysis apply to infer more precise properties of the solution. The importance
of these issues becomes more evident if dealing with variational problems for discontinuous functionals and over domains with non-smooth boundaries when many of the classical nonlinear analysis techniques fail.

This paper deals with boundedness properties of the weak solutions to divergence form nonlinear systems with data belonging to suitable Morrey spaces. More precisely, we consider the
Dirichlet problem  
\begin{equation}\label{P}
\begin{cases}
\bu\in W^{1,p}_0(\Omega;\mathbb{R}^N),\quad N>1\\
\mathrm{div\,}\big(\bA(x,\bu,D\bu)\big)= \bb(x,\bu,D\bu)\quad   \text{weakly in}\ \Omega\\
\end{cases}
\end{equation}
over a bounded domain $\Omega\subset\R^n,$ $n\geq2,$  with generally low regular boundary $\partial\Omega$ and where the nonlinear terms are given by the Carath\'eodory maps 
$\bA\colon \Omega\times\R^N\times \R^{N\times n}\to\R^{N\times n},$ $\bA=\big\{a^\alpha_i\big\}_{i=1,\ldots,n}^{\alpha=1,\ldots,N},$
$\bb\colon \Omega\times\R^N\times \R^{N\times n}\to\R^{N},$ $\bb=(b^1,\ldots,b^N).$ Let us stress the reader attention at the very beginning that, even if \eqref{P} can be viewed as the Euler--Lagrange equation for many important functionals from mathematical
physics and differential geometry  (such as the nonlinear Schr\"odinger,
Gunzburg--Landau, harmonic maps between Riemannian manifolds, Gross--Pitaevskii, etc.), the structure of the nonlinear operator here considered in not necessary variational.

Our main goal is to obtain sufficient conditions  ensuring that \textit{any} weak solution
$\bu(x)=\big(u^1(x),\ldots,u^N(x)\big)\colon \Omega\to \R^N$ of \eqref{P} is \textit{globally essentially bounded} in $\Omega$ when $1<p\leq N$ (the boundedness is an immediate consequence of the Sobolev imbedding theorem when $p> N$). These conditions require relevant coercivity of the differential operator in \eqref{P}, suitable growths of the nonlinearities with respect to the solution $\bu$ and its gradient $D\bu,$ while the $x$-behaviour of $\bA$ and $\bb$  will be controlled in terms of Morrey functional scales (see also \cite{Sf} for particular systems of elliptic type). 

The boundedness problem for the weak solutions to elliptic \textit{equations} $(N=1)$ is  completely solved. The seminal result of De Giorgi \cite{DG1} and Nash \cite{N} ensures boundedness and H\"older continuity of the $W^{1,2}_0$-weak solutions to \textit{linear} elliptic equations with $L^p$-coefficients and it was later extended by Ladyzhenskaya and Ural'tseva \cite{LU} to the case of \textit{quasilinear} equations. Recently, boundedness and H\"older continuity of the weak solutions to general quasilinear equations have been proved (\cite{BP-IUMJ,BP-CalcVar} when $p=2$ and \cite{BPS,BPS-arxiv} when $p\in(1,n]$) allowing control in terms of Morrey spaces for the $x$-behaviour of the nonlinear terms.

The situation changes drastically when passing to systems $(N>1).$ The boundedness properties of the weak solutions to \eqref{P} are strongly conditioned by the De~Giorgi example \cite{DG2} of \textit{linear elliptic system} with \textit{unbounded} weak solution where the lack of boundedness is due to the particular structure of the off-diagonal elements of the coefficients matrix. For linear systems with ``almost'' diagonal structure Ladyzhenskaya and Ural'tseva  proved in \cite{LU} local boundedness of the weak solutions (see also \cite{Lan}), while Ne\v{c}as and Star\'a \cite{NS} derived a maximum principle for quasilinear systems which are diagonal for large values of $\bu.$ 
Boundedness of the weak solutions to nondiagonal systems has been obtained by Meier in \cite{M}
assuming that the operator in \eqref{P} is   \textit{coercive}  
$$
\sum_{\alpha=1}^N \bA^\alpha(x,\bu,D\bu)\cdot D\bu^\alpha\geq \varkappa |D\bu|^p - \text{lower order terms}\,(x,\bu)
$$
and that the indicator function
$$
\sum_{\alpha,\beta=1}^N \dfrac{\bu^\alpha\bu^\beta}{|\bu|^2} \bA^\alpha(x,\bu,D\bu)\cdot D\bu^\beta
$$
is nonnegative on the set where $|\bu|$ is large. Apart from the fact that this last condition depends on the particular solution, it is very difficult to check it in general.

In \cite{Bjorn} Bj\"orn considered quasilinear systems which are not too far from being diagonal. Precisely, the author required  \textit{componentwise coercivity} of the differential operator in \eqref{P},
$$ 
\bA^\alpha(x,\bu,D\bu)\cdot D\bu^\alpha\geq \varkappa |Du^\alpha|^p - \text{lower order terms}\,(x,\bu)
$$
for any $\alpha\in\{1,\ldots,N\}$ which means that the $\alpha$-th equation of the system \eqref{P} is \textit{coercive} with respect to the gradient of the $\alpha$-th component of the weak solution. Assuming additionally $(p-1)$-growths of the nonlinear terms
$$
\begin{cases}
\hfill
\big|\bA(x,\bu,D\bu)\big|= &\!\!\!\! \mathcal{O}\left(\varphi(x)+|\bu|^{p-1}+|D\bu|^{p-1}\right),\\
\hfill
\big|\bb(x,\bu,D\bu)\big|= &\!\!\!\! \mathcal{O}\left(\psi(x)+|\bu|^{p-1}+|D\bu|^{p-1}\right)
\end{cases}
$$
with $\varphi$ and $\psi$ taken in suitable Lebesgue spaces, Bj\"orn proved local boundedness and almost everywhere classical differentiability for the $W^{1,p}$-weak solutions when $1<p\leq2.$ (Actually, the hypotheses in \cite{Bjorn} are expressed in a more general form allowing to treat degenerate operators with $p$-admissible weights and also variational inequalities.)

Recently maximum principle results have been obtained for componentwise coercive systems
with lower-order term $\bb(x,\bu,D\bu)$ satisfying a sort of sign-condition with respect to $\bu,$ and $\varphi,$ $\psi$ belonging to suitable Lebesgue (\cite{Leo}) or Morrey (\cite{Sof,Sof2}) spaces.

\bigskip

In the present paper we consider \textit{componentwise coercive} systems \eqref{P}. The nonlinear terms are subject of \textit{controlled growth assumptions} that are the optimal ones giving sense of the concept of weak solution. Precisely, the principal part of the differential operator is supposed to satisfy
$$
\big|\bA(x,\bu,D\bu)\big|=  \mathcal{O}\left(\varphi(x)+|\bu|^{\frac{p^*(p-1)}{p}}+|D\bu|^{p-1}\right)
$$
with the Sobolev conjugate $p^*$ of $p,$ while on the lower order term we require \textit{gradient componentwise controlled growth}
$$
\big|b^\alpha(x,\bu,D\bu)\big|= \mathcal{O}\left(\psi(x)+|\bu|^{p^*-1}+|Du^\alpha|^{\frac{p(p^*-1)}{p^*}}\right)\quad \forall \alpha\in\{1,\ldots,N\}.
$$
Actually, the last condition means that the right-hand side of the $\alpha$-th equation grows as the $\frac{p(p^*-1)}{p^*}$-th power of the gradient of the $\alpha$-th solution component. Indeed, this hypothesis is more restrictive than the general controlled growth assumption when $\big|b^\alpha(x,\bu,D\bu)\big|$ can grow as $|D\bu|^{\frac{p(p^*-1)}{p^*}}$ and we do not know whether this is only a technical restriction or it is intrinsically related to the nature of the systems studied. The functions $\varphi$ and $\psi$ are taken in the Morrey spaces $L^{r,\lambda}$ and $L^{s,\mu},$ respectively, with $(p-1)r+\lambda>n$ and $ps+\mu>n,$ and the particular situation when $\lambda=\mu=0$ covers also the case of $L^r/L^s$ data. 

The main result of the paper (Theorem~\ref{thm}) asserts global boundedness of any weak solution to the problem \eqref{P}. Our technique is inspired by that already used in \cite{BPS-arxiv} to get  H\"older continuity for weak solutions to coercive \textit{equations} with Morrey data. It relies on exact  decay estimates for the total mass of each component $u^\alpha$ of the weak solution taken over the level sets of $u^\alpha.$ However, the presence of Morrey data $\varphi$ and $\psi,$ and the specificity of the controlled growth assumptions require this mass to be taken with respect to a positive Radon
measure $\m,$ depending on $\varphi$ and $\psi,$ but also on a suitable power of the weak solution itself.
The Morrey integrability of $\varphi$ and $\psi$ allows to employ precise Sobolev inequalities of trace type proved by D.R.~Adams \cite{Ad} and V.G.~Maz'ya \cite{Mz1,Mz2} in order to estimate the $\m$-mass of $u^\alpha$ in terms of the $p$-energy of $u^\alpha$ for each $\alpha.$  To manage the nonlinear part of $\m$ that depends on the solution $\bu$ we rely also on the higher gradient integrability in the the spirit of  Gehring and Giaquinta. This, combined with the controlled growth conditions, gives an estimate for the $p$-energy of $u^\alpha$ in terms of small multiplier of the same quantity plus a suitable power of the $u^\alpha$-level set $\m$-measure. It is at that point that the
$|Du^\alpha|^{\frac{p(p^*-1)}{p^*}}$-growth of $|b^\alpha(x,\bu,D\bu)\big|$ plays a crucial role since, on the level set of $u^\alpha$ we 
can control only $Du^\alpha$ but not  $Du^\beta$
for $\beta\neq\alpha.$ It is worth noting that,
allowing  \textit{full gradient} growth
$|D\bu|^{\frac{p(p^*-1)}{p^*}}$ of $|\bb(x,\bu,D\bu)\big|,$ the technique of Bj\"orn from \cite{Bjorn} would give the boundedness result only for very small values of $p$ in the range $\left(1,\frac{2n}{n+1}\right).$ Once having a  good decay estimate for the solution total $\m$-mass, the global boundedness follows by a maximum principle result due to Hartman and Stampacchia.

As consequence of the Morrey control with respect to $x$ of the nonlinear terms in \eqref{P} we obtain (Theorem~\ref{thm2}) also Morrey regularity for  the gradient of the \textit{bounded} weak solutions.

\subsection*{Acknowledgments.}
The authors are members of INdAM/GNAMPA.

\section{Hypotheses and main results}\label{sec2}

We will use throughout the paper two types of indices: the subscript $i$ indicates the $i$-th component of a point $x\in\R^n$ and varies from $1$ to $n,$  while the superscript $\alpha$ runs from $1$ to $N$ and stands for the $\alpha$-th component of an $N$-dimensional vector.  The boldface small roman letters $\bb,\bu,\bv,\ldots$ denote $N$-dimensional vector-valued functions whereas boldface capital letters $\bA,\bxi$ stand for $N\times n$ matrices. The $n$-dimensional ball centered at $x$ and of radius $\rho$ will be denoted by  $B_\rho(x)$ and $|E|$ stands for the Lebesgue measure of a measurable set $E\subseteq\Omega.$

Given $p\in(1,\infty),$ the Sobolev space  of once weakly differentiable functions $u\colon\ \Omega\to\R$ belonging to $L^p(\Omega)$ together with the gradient $Du$ is denoted as usual by $W^{1,p}(\Omega),$ the norm in $W^{1,p}(\Omega)$ is given by 
$$
\|u\|_{W^{1,p}(\Omega)}:= \|u\|_{L^{p}(\Omega)}+\|Du\|_{L^{p}(\Omega)},
$$
and $W^{1,p}_0(\Omega)$ stands for the closure of $C^{\infty}_0(\Omega)$ with respect to that norm. Further on, $W^{1,p}(\Omega;\R^N)$ is the collection of all vector-valued functions $\bu\colon \Omega\to\R^N,$ $\bu(x)=\big(u^1(x),\ldots,u^N(x)\big),$ such that $u^\alpha\in W^{1,p}(\Omega)$ and
$$
\|\bu\|_{W^{1,p}(\Omega;\R^N)}:=\sum_{\alpha=1}^N \|u^\alpha\|_{W^{1,p}(\Omega)}.
$$
We will denote by $p^*$ the \textit{Sobolev conjugate} of $p,$ that is,
$$
p^* =
\begin{cases}
\ds
\frac{np}{n-p} & \text{if}\ p<n,\\[6pt]
\text{arbitrary large number}>n & \text{if}\ p\geq n.
\end{cases}
$$

Let us recall, for reader's convenience, the definition of the Morrey spaces. Given a bounded domain $\Omega\subset\mathbb{R}^n,$ $r\in[1,\infty)$ and $\lambda\in[0,n],$ a function $u\in L^r(\Omega)$ belongs to the Morrey class $L^{r,\lambda}(\Omega)$ if
$$
\|u\|_{L^{r,\lambda}(\Omega)}:=\sup_{\underset{\rho\in(0,\mathrm{diam\,}\Omega)}{x_0\in\Omega,}}
\left( \dfrac{1}{\rho^{\lambda}}
\int_{B_\rho(x_0)\cap\Omega} |u(x)|^r \; dx\right)^{1/r}<\infty.
$$
This quantity defines a norm which makes $L^{r,\lambda}(\Omega)$ a Banach space. The limit cases $\lambda=0$ and $\lambda=n$ give rise, respectively, to $L^r(\Omega)$ and $L^\infty(\Omega).$ It is worth noting (see \cite{Pic}) that the imbedding
$$
L^{r_1,\lambda_1}(\Omega) \subseteq L^{r_2,\lambda_2}(\Omega)
$$
holds \textit{if and only if}
$$
r_1\geq r_2\quad \text{and}\quad \dfrac{r_1}{n-\lambda_1} \geq \dfrac{r_2}{n-\lambda_2}.
$$

Throughout the paper we will consider a bounded domain $\Omega\subset\R^n$ with $n\geq2,$ and will suppose that the boundary $\partial\Omega$ satisfies a \textit{measure density condition} which is a two-sided version of the the so-called  \textit{(A)-property} of Ladyzhenskaya and  Ural'tseva (see \cite{LU}) which requires  that for each $x\in\ol\Omega$ the Lebesgue measure of $B_\rho(x)\cap\Omega$ is comparable to the measure of the ball $B_\rho(x)$ itself.  Precisely, we suppose that there exists a constant $A_\Omega\in(0,1)$ such that
\begin{equation}\label{A}
A_\Omega \rho^n\leq
\big|B_\rho(x)\cap\Omega\big|\leq \big(1-A_\Omega\big) \rho^n\quad \forall x\in\ol\Omega,\ \forall \rho\in(0,\mathrm{diam\,}\Omega).
\end{equation}
The lower bound above excludes interior cusps at each point of the boundary and this ensures the validity of the Sobolev imbedding theorem within the spaces $W^{1,p}(\Omega).$ The upper bound instead excludes exterior cusps at $\partial\Omega$ and this serves, as will be seen in Lemma~\ref{lem4} below, to obtain higher gradient integrability for the weak solutions of \eqref{P}. The (A)-property holds for example when $\partial\Omega$ supports the \textit{uniform} interior and exterior cone conditions. In particular,  \eqref{A} is always verified if $\partial\Omega$ is $C^1$-smooth, or Lipschitz, or Reifenberg flat (cf. \cite{NA} and the references therein).

It is worth noting that the results here presented remain valid also in less ``regular'' domains $\Omega$ when the measure density condition \eqref{A} is replaced by the more general one expressed in terms of \textit{variational $p$-capacity} that requires the complement $\mathbb{R}^n\setminus\Omega$ to be \textit{uniformly $p$-thick} (see \cite{BPS-arxiv} for more details).

\smallskip

Turning back to \eqref{P}, recall that a function $\bu\in W^{1,p}_0(\Omega;\R^N)$  is called a \textit{weak solution\/} of the problem
$\eqref{P}$ if
\begin{align}\label{WS}
&\int_\Omega 
\sum_{\alpha=1}^N \sum_{i=1}^n
a_i^\alpha\big(x,\bu(x),D\bu(x)\big) D_iv^\alpha(x)\;dx\\
\nonumber
&\qquad\qquad
+\int_\Omega \sum_{\alpha=1}^N b^\alpha\big(x,\bu(x),D\bu(x)\big)v^\alpha(x)\;dx=0
\end{align}
for each test function $\bv\in W^{1,p}_0(\Omega;\R^N).$

Indeed, the concept of weak solution to \eqref{P} makes sense only if the integrals involved in \eqref{WS} are convergent and this is ensured by imposing suitable growth requirements on the nonlinear terms. The optimal conditions of this kind are known as \textit{controlled growth conditions} and have the form
\begin{equation}\label{CG}
\begin{cases}
\hfill
\big|a^\alpha_i(x,\bz,\bxi)\big|\leq &\!\!\!\! \Lambda\Big(\varphi(x)+|\bz|^{\frac{p^*(p-1)}{p}}+|\bxi|^{p-1}\Big),\\[6pt]
\hfill
\big|b^\alpha(x,\bz,\bxi)\big|\leq &\!\!\!\! \Lambda\Big(\psi(x)+|\bz|^{p^*-1} +|\bxi|^{\frac{p(p^*-1)}{p^*}}\Big)
\end{cases}
\end{equation}
for all $i\in\{1,\ldots,n\},$ all $\alpha\in\{1,\ldots,N\},$ almost all $x\in\Omega,$ all $\bz\in\R^N$ and all $\bxi=\{\xi^\alpha_i\}\in\R^{N\times n},$  where $\Lambda$ is a positive constant, and $\varphi,\psi$ are nonnegative functions satisfying
$$
\varphi\in L^r(\Omega),\ r\geq \dfrac{p}{p-1};\quad
\psi \in L^s(\Omega),\ s\geq \dfrac{np}{np+p-n}.
$$

Actually, the conditions \eqref{CG} are away of being sufficient to ensure essential boundedness of the weak solution to \eqref{P} when $p\leq n,$ and we will straighten these, complementing them by a sort of coercivity assumption of the operator, together with Morrey integrability of the functions $\varphi$ and $\psi:$
\begin{equation}\label{M}
\left\{
\begin{array}{llll}
\ds
\varphi\in L^{r,\lambda}(\Omega), & r>\dfrac{p}{p-1}, & \lambda\in[0,n), & (p-1)r+\lambda>n,\\[6pt]
\ds
\psi\in L^{s,\mu}(\Omega), & s>\dfrac{np}{np+p-n}, & \mu\in[0,n), & ps+\mu>n.
\end{array}
\right.
\end{equation}

We will suppose that the $\alpha$-th equation of the system \eqref{P} is coercive with respect to the gradient of the $\alpha$-th component of the solution, that is,

\smallskip

$\bullet$ \textit{Componentwise coercivity of the differential operator:} There exist  positive constants $\varkappa$ and $\Lambda$ such that for each $\alpha\in\{1,\ldots,N\}$ one has
\begin{equation}\label{CWC}
\begin{cases}
\bA^\alpha(x,\bz,\bxi)\cdot \bxi^\alpha:=&\!\!\!\ds \sum_{i=1}^n a^\alpha_i(x,\bz,\bxi)\xi^\alpha_i
\\[6pt]
\hfill \geq &\!\!\!\varkappa |\bxi^\alpha|^p-\Lambda|\bz|^{p^*} -\Lambda \varphi^{\frac{p}{p-1}}(x)
\end{cases}
\end{equation}
for almost all $x\in\Omega,$ all $\bz=(z^1,\ldots,z^N)\in \R^N$ and all $\bxi=(\bxi^1,\ldots,\bxi^N)^{\mathrm{T}}\in \R^{N\times n}$ with $\varphi$  as in \eqref{M}.

\smallskip

$\bullet$
We assume further \textit{controlled growths} of the principal part $\bA(x,\bu,D\bu)$ (that is, $\eqref{CG}_1$) and \textit{gradient componentwise controlled growths} of the lower order term $\bb(x,\bu,D\bu):$
\begin{equation}\label{CWCG}
\begin{cases}
\hfill
\big|a^\alpha_i(x,\bz,\bxi)\big|\leq &\!\!\!\! \Lambda\Big(\varphi(x)+|\bz|^{\frac{p^*(p-1)}{p}}+|\bxi|^{p-1}\Big),\\[6pt]
\hfill
\big|b^\alpha(x,\bz,\bxi)\big|\leq &\!\!\!\! \Lambda\Big(\psi(x)+|\bz|^{p^*-1} +|\bxi^\alpha|^{\frac{p(p^*-1)}{p^*}}\Big)
\end{cases}
\end{equation}
for all $i\in\{1,\ldots,n\},$ all $\alpha\in\{1,\ldots,N\},$ almost all $x\in\Omega,$ all $\bz=(z^1,\ldots,z^N)\in \R^N$ and all $\bxi=(\bxi^1,\ldots,\bxi^N)^{\mathrm{T}}\in \R^{N\times n}$ with $\varphi$ and $\psi$  as in \eqref{M}.

The meaning of $\eqref{CWCG}_2$ is that the lower order term of the $\alpha$-th equation  supports $\frac{p(p^*-1)}{p^*}$-growth with respect to the gradient $Du^\alpha$ of the $\alpha$-th component of the solution $\bu.$ 
Indeed, $\eqref{CWCG}_2$ is more restrictive than $\eqref{CG}_2$ but it anyway allows to consider systems with general enough lower order terms. It will be clear from the proofs below, that the boundedness result (Theorem~\ref{thm}) remains valid if substitute $\eqref{CWCG}$ with $\eqref{CG}$ and require additionally the \textit{sign condition}
\begin{equation}\label{SC}
b^\alpha(x,\bz,\bxi){.}\mathrm{sign\,}z^\alpha \geq 
- \Lambda\Big(\psi(x)+|\bz|^{p^*-1} +|\bxi^\alpha|^{\frac{p(p^*-1)}{p^*}}\Big).
\end{equation}
In this sense, our result generalizes these proved in \cite{Leo} and \cite{Sof,Sof2} where \eqref{CG} are complemented with the requirement that for each $\alpha\in\{1,\ldots,N\}$ one has
$$
b^\alpha(x,\bz,\bxi){.}\mathrm{sign\,}z^\alpha \geq 0.
$$

Throughout the article the letter $C$ will denote various positive constants
depending on known quantities appearing in the above conditions, with the  omnibus term 
\textit{``known quantities''}
 including  $n,$ $N,$ $p,$ $\varkappa,$ $\Lambda,$ $r,$ $\lambda,$ $s,$ $\mu,$ $\|\varphi\|_{L^{r,\lambda}(\Omega)},$ $\|\psi\|_{L^{s,\mu}(\Omega)},$ $|\Omega|$ and  $A_\Omega.$

Our main result asserts essential boundedness  of the $W^{1,p}_0(\Omega;\R^N)$-weak solutions to the Dirichlet problem \eqref{P}. Namely,
\begin{thm}\label{thm}
Let $p\in (1,n]$ and assume \eqref{A}, \eqref{M} and \eqref{CWC}. Suppose moreover that
either \eqref{CWCG} or \eqref{CG} with \eqref{SC} are verified. Then any weak $W^{1,p}_0(\Omega;\R^N)$-solution of \eqref{P} is essentially bounded.  Precisely, there exists a constant $M$ depending on known quantities, on
$\|D\bu\|_{L^{p}(\Omega;\R^{N\times n})}$ and on the uniform integrability of $|D\bu|^p$ (cf. Remark~\ref{rem-HI} below) such that
\begin{equation}\label{EST}
\|\bu\|_{L^\infty(\Omega;\R^N)}\leq M.
\end{equation}
\end{thm}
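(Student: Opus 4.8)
The plan is to bound each component $u^\alpha$ separately by running a De~Giorgi--Stampacchia iteration over its level sets, with the Lebesgue measure replaced by a positive Radon measure $\m$ tailored to the Morrey data and to the solution itself. Fix $\alpha\in\{1,\dots,N\}$; it suffices to estimate $u^\alpha$ from above, the bound from below being entirely analogous (one uses the test function $v^\alpha=\min(u^\alpha+k,0)$ on $\{u^\alpha<-k\}$ and, in the alternative setting, the sign condition \eqref{SC}). For $k>0$ set $A_k:=\{x\in\Omega:\ u^\alpha(x)>k\}$ and insert into \eqref{WS} the admissible test function $\bv$ with $v^\alpha:=(u^\alpha-k)^+\in W^{1,p}_0(\Omega)$ and $v^\beta\equiv0$ for $\beta\ne\alpha$. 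Since $D(u^\alpha-k)^+=Du^\alpha\,\chi_{A_k}$, \eqref{WS} collapses to $\int_{A_k}\bA^\alpha(x,\bu,D\bu)\cdot Du^\alpha\,dx=-\int_{A_k}b^\alpha(x,\bu,D\bu)(u^\alpha-k)\,dx$, and the componentwise coercivity \eqref{CWC} upgrades this to the basic energy inequality
\[
\varkappa\int_{A_k}|Du^\alpha|^p\,dx\le-\int_{A_k}b^\alpha(x,\bu,D\bu)(u^\alpha-k)\,dx+\Lambda\int_{A_k}\Big(|\bu|^{p^*}+\varphi^{\frac{p}{p-1}}\Big)\,dx.
\]

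Next I would dispose of the lower order term. By $\eqref{CWCG}_2$ (or, under the alternative hypotheses, by $\eqref{CG}$ together with \eqref{SC}) one has $-b^\alpha(x,\bu,D\bu)(u^\alpha-k)^+\le\Lambda\big(\psi+|\bu|^{p^*-1}+|Du^\alpha|^{\frac{p(p^*-1)}{p^*}}\big)(u^\alpha-k)^+$, and here the crucial feature is that only $|Du^\alpha|$, never the full $|D\bu|$, enters the gradient term. Young's inequality with the conjugate exponents $p^*$ and $(p^*)'$ therefore gives $|Du^\alpha|^{\frac{p(p^*-1)}{p^*}}(u^\alpha-k)^+\le\varepsilon|Du^\alpha|^p+C_\varepsilon(u^\alpha-k)^{p^*}$, whose first summand is absorbed on the left; a full-gradient growth $|D\bu|^{\frac{p(p^*-1)}{p^*}}$ would instead leave the terms $|Du^\beta|^{\frac{p(p^*-1)}{p^*}}$, $\beta\ne\alpha$, which cannot be controlled on $A_k$. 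The Sobolev inequality in $W^{1,p}_0$ bounds $\int_{A_k}(u^\alpha-k)^{p^*}\,dx\le C\|Du^\alpha\|_{L^p(A_k)}^{p^*-p}\int_{A_k}|Du^\alpha|^p\,dx$, and for levels $k\ge k_0$, with $k_0$ sufficiently large --- this is where $\|D\bu\|_{L^p(\Omega)}$ and the uniform integrability of $|D\bu|^p$ of Remark~\ref{rem-HI} come in --- the prefactor $\|Du^\alpha\|_{L^p(A_k)}^{p^*-p}$ is small and this term is absorbed as well. What survives on the right is $\int_{A_k}\big(\psi\,(u^\alpha-k)+|\bu|^{p^*}+\varphi^{\frac{p}{p-1}}\big)\,dx$.

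At this point I would introduce the Radon measure $\m$ with density of the form $\chi_\Omega+\varphi^{\frac{p}{p-1}}+\psi^{\sigma}+|\bu|^{\tau}$, for suitable $\sigma\ge1$ and $\tau$ below $p^*$. Invoking the higher gradient integrability $D\bu\in L^{p+\delta}$ of Lemma~\ref{lem4}, hence $\bu\in L^{p^*+\delta}$, together with suitable Hölder splittings of the terms carrying $\psi$ and $|\bu|^{p^*}$, the surviving right-hand side is dominated by $C\,\m(A_k)^{\beta_0}$ for some $\beta_0>0$, plus a small multiple of $\int_{A_k}|Du^\alpha|^p$ which is absorbed. The extra integrability $\delta>0$ serves moreover to make the $|\bu|$-part of $\m$ an \emph{admissible} measure and, jointly with the Morrey hypotheses \eqref{M} on $\varphi$ and $\psi$, to guarantee the Morrey-type ball bound $\m\big(B_\rho(x)\big)\le C\rho^{\,n-p+\eta}$ with some $\eta>0$, uniformly in $x$ and $\rho$ --- precisely what the trace-type Sobolev inequality of D.R.~Adams and V.G.~Maz'ya requires. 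Since $\m$ dominates the Lebesgue measure, this yields $\big(\int_\Omega|w|^{q}\,d\m\big)^{1/q}\le C\|Dw\|_{L^p(\Omega)}$ for all $w\in W^{1,p}_0(\Omega)$, with $q=\tfrac{p(n-p+\eta)}{n-p}>p$. Applying it to $w=(u^\alpha-k)^+$ and using Chebyshev's inequality, for $h>k\ge k_0$,
\[
(h-k)^{q}\,\m(A_h)\le\int_{A_k}(u^\alpha-k)^{q}\,d\m\le C\Big(\int_{A_k}|Du^\alpha|^p\,dx\Big)^{q/p}\le C\,\m(A_k)^{\beta},\qquad \beta=\tfrac{q\beta_0}{p},
\]
the exponents having to be calibrated so that $\beta>1$; the analogous estimate holds for $\{u^\alpha<-k\}$.

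Finally, $k\mapsto\m(A_k)$ is non-increasing on $[k_0,\infty)$ and, once $\beta>1$, satisfies $\m(A_h)\le C(h-k)^{-q}\m(A_k)^{\beta}$; the Hartman--Stampacchia/De~Giorgi iteration lemma then produces $\m(A_{k_0+d})=0$ for an explicit $d$ depending only on $\m(A_{k_0})$ and the known quantities, whence $u^\alpha\le k_0+d$ a.e. in $\Omega$. Together with the symmetric lower bound and summation over $\alpha$ this gives \eqref{EST}, with $M$ depending on the known quantities and, through $k_0$ and $\m(A_{k_0})$, on $\|D\bu\|_{L^p(\Omega;\R^{N\times n})}$ and on the uniform integrability of $|D\bu|^p$. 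I expect the main obstacle to be exactly the calibration of $\m$: the power $\tau$ of $\bu$ in its density must be small enough that the modest Gehring--Giaquinta gain $\delta$ pushes the ball-exponent of the $\bu$-part strictly above $n-p$ (so that Adams--Maz'ya delivers a genuine $q>p$), and yet large enough that the $|\bz|^{p^*}$ contribution in \eqref{CWC} and the $|\bz|^{p^*-1}$ contribution in $\eqref{CWCG}_2$ are still absorbed into powers of $\m(A_k)$ with the bookkeeping leaving $\beta>1$. Verifying that these requirements are simultaneously met, and carefully tracking the absorption constants and the inter-level coupling of the iteration, is where the real work lies.
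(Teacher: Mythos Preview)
Your plan is close in spirit to the paper's: both build a Radon measure with density combining $\chi_\Omega$, $\varphi^{p/(p-1)}$, $\psi$ and a power of $|\bu|$, both invoke Gehring--Giaquinta higher integrability and the Adams--Maz'ya trace inequality, and both conclude via a Hartman--Stampacchia extinction lemma. The essential difference is that you iterate component by component on $A_k=\{u^\alpha>k\}$, whereas the paper iterates on the level sets $\Omega_k=\{W>k\}$ of $W(x)=\max_\alpha|u^\alpha(x)|$, testing with \emph{all} the $v^\alpha=(|u^\alpha|-k)^+\,\mathrm{sign}\,u^\alpha$ simultaneously.

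This difference is not cosmetic, and your worry about the calibration is exactly on target: the component-by-component scheme cannot produce $\beta>1$. The obstruction is the term $\int_{A_k}|\bu|^{p^*}\,dx$ generated by \eqref{CWC} and by the $|\bz|^{p^*-1}$-growth of $b^\alpha$. On $A_k$ the components $u^\beta$, $\beta\ne\alpha$, are unconstrained, so $|\bu|$ is \emph{not} comparable to $u^\alpha$ there, and the only generic estimate is the H\"older bound $\int_{A_k}|\bu|^{p^*}\le C|A_k|^{1-p^*/p_0^*}\le C\,\m(A_k)^{\beta_0}$ with $\beta_0=1-p^*/p_0^*$ coming solely from the small Gehring gain. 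Since the Adams exponent satisfies $q/p=(n-p+\eta)/(n-p)$, which is only slightly above $1$, the product $q\beta_0/p$ stays well below $1$ and the iteration does not close. The paper's remedy is that on $\Omega_k$ one has $|\bu|\le\sqrt{N}\,W=\sqrt{N}(w+k)$ with $w=(W-k)^+$, which permits the splitting $|\bu|^{p^*}\le C(w^p+k^p)|\bu|^{p^2/(n-p)}$. Taking $\tau=p^2/(n-p)$ in the measure and applying the trace inequality once more to $\int_{\Omega_k}w^p\,d\m''$ (with $d\m''=|\bu|^{p^2/(n-p)}\,dx$ and some $t<p$) gives $\int_{\Omega_k}|\bu|^{p^*}\,dx\le C|\Omega_k|^{p/t-1}\int_{\Omega_k}|Dw|^p\,dx+Ck^p\m(\Omega_k)$, the first summand being absorbed for large $k$. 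The resulting \emph{linear} energy bound $\int_{\Omega_k}|Dw|^p\,dx\le Ck^p\m(\Omega_k)$ then feeds, via Cavalieri, into Lemma~\ref{lem3} with $\sigma=1$ and $\nu=\delta/(p(n-p+\delta))>0$. In short, the $|\bu|^{p^*}$-coupling in the structure conditions forces you to iterate on the maximum of the components rather than on each component separately.
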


It is worth noting that the  requirements \eqref{M}, as well as the growth assumptions \eqref{CWCG}, respectively \eqref{CG}, are \textit{sharp} in order to have essential boundedness of the weak solutions to \eqref{P}.
This follows easily on the base of the examples given in \cite[Section~4.2]{BPS-arxiv} applied to a simple non-coupled system of $N$ single equations.

\section{Proof of Theorem \ref{thm}}\label{sec3}

\subsection{Auxiliary results} 

The desired essential boundedness of the weak solutions to \eqref{P} will be obtained on the base of suitable decay estimates for the total mass of its $\alpha$-th component $u^\alpha$ on the super-level set of $u^\alpha,$ and these will be derived with the aid of two auxiliary results.

The first one is a trace inequality for the weighted Lebesgue norms, and we will use it in a form given by D.R.~Adams in \cite{Ad}, referring the reader also to the deep results of V.G.~Maz'ya \cite{Mz1,Mz2} for more general situations.
\begin{lem}\label{lem2}
Let $\m$ be a positive Radon measure with support  in $\Omega\subset\R^n,$ such that 
$$
\m(B_\rho(x))\leq A \rho^{a}\qquad
\forall\ x\in\R^n,\ \forall\ \rho>0,
$$ 
with a  constant $A,$ and where
$$
a=\frac{q}{t}(n-t),\quad 1<t<q<\infty,\quad t<n.
$$

Then
$$
\left(\int_\Omega |v(x)|^q\;d\m\right)^{1/q} \leq C(n,t,q) A^{1/q}
\left(\int_\Omega |Dv(x)|^t\;dx\right)^{1/t}
$$
for each $v\in W^{1,t}_0(\Omega).$
\end{lem}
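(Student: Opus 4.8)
The statement is a gradient version of D.R.~Adams' trace theorem for Riesz potentials, so the plan is to reduce it to the potential setting and then recall the potential‑theoretic argument. \textbf{Step 1 (reduction to a Riesz potential estimate).} For $v\in C^\infty_0(\Omega)$ the elementary representation $v(x)=c_n\int_{\R^n}\frac{(x-y)\cdot Dv(y)}{|x-y|^n}\,dy$, obtained by integrating $\frac{d}{dr}v(x+r\omega)$ along the rays $\omega\in S^{n-1}$ and passing to polar coordinates, gives the pointwise bound $|v(x)|\le c_n\, I_1(|Dv|)(x)$, where $I_1 g(x):=\int_{\R^n}\frac{g(y)}{|x-y|^{n-1}}\,dy$ is the Riesz potential of order $1$. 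Extending an arbitrary $v\in W^{1,t}_0(\Omega)$ by zero to $\R^n$, approximating it in $W^{1,t}$ by $C^\infty_0(\Omega)$ functions and using boundedness of $I_1$ from $L^t$ into $L^{t^*}$ (here $t<n$ is used) to pass to the limit, it suffices to prove $\|I_1 g\|_{L^q(d\m)}\le C(n,t,q)\,A^{1/q}\,\|g\|_{L^t(\R^n)}$ for every $0\le g\in L^t(\R^n)$, under the hypothesis $\m(B_\rho(x))\le A\rho^a$ with $a=\tfrac qt(n-t)$.

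\textbf{Step 2 (Hedberg splitting).} Fix $x$ and split $I_1 g(x)=\int_{|x-y|<\delta}+\int_{|x-y|\ge\delta}$ for a parameter $\delta>0$. Decomposing the near part over the dyadic annuli $2^{-j-1}\delta\le|x-y|<2^{-j}\delta$ and using $\int_{B_\rho(x)}g\le C\rho^n\,Mg(x)$ on each, where $Mg$ is the Hardy--Littlewood maximal function, one obtains $\int_{|x-y|<\delta}\frac{g(y)}{|x-y|^{n-1}}\,dy\le C\,\delta\,Mg(x)$. For the far part, Hölder's inequality with exponents $t,t'$ together with the convergence of $\int_{|z|\ge\delta}|z|^{-(n-1)t'}\,dz$ --- which holds precisely because $t<n$ --- yields $\int_{|x-y|\ge\delta}\frac{g(y)}{|x-y|^{n-1}}\,dy\le C\,\delta^{1-n/t}\,\|g\|_{L^t(\R^n)}$. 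Optimising the resulting bound $I_1 g(x)\le C\delta\,Mg(x)+C\delta^{1-n/t}\|g\|_{L^t}$ in $\delta>0$ gives the pointwise estimate $I_1 g(x)\le C\,(Mg(x))^{1-t/n}\,\|g\|_{L^t(\R^n)}^{t/n}$. Raising this to the power $q$ and integrating against $d\m$, and observing that $\tfrac{q(n-t)}{n}=\tfrac{at}{n}$, reduces the claim to the weighted maximal bound $\int_\Omega (Mg)^{\,q(n-t)/n}\,d\m\le C\,A\,\|g\|_{L^t(\R^n)}^{\,q(n-t)/n}$.

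\textbf{Step 3 (the weighted maximal estimate).} This last inequality is the heart of the matter. One estimates $\m(\{Mg>s\})$ by covering the open set $\{Mg>s\}$ with a Vitali family of balls $B_i$ on which $|B_i|\lesssim s^{-1}\int_{B_i}g$, whence Hölder's inequality gives $r_i^{\,n/t}\lesssim s^{-1}\|g\|_{L^t(B_i)}$ and therefore $\m(B_i)\le A r_i^{\,a}\lesssim A\,s^{-at/n}\|g\|_{L^t(B_i)}^{\,at/n}$; inserting the result into the distribution‑function formula $\int (Mg)^\sigma\,d\m=\sigma\int_0^\infty s^{\sigma-1}\m(\{Mg>s\})\,ds$ with $\sigma=\tfrac{at}{n}$ is meant to produce the bound. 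I expect this to be the only genuinely delicate point: when $a<n$ one has $\sigma=\tfrac{at}{n}<t$, so the summation over $i$ does not follow directly from $\sum_i\|g\|^t_{L^t(B_i)}\le\|g\|_{L^t}^t$, and a crude covering at a single level is not enough. This is exactly where Adams' argument in \cite{Ad} is non‑trivial: one first establishes the weak‑type inequality $\m(\{I_1 g>s\})\le C A\,(\|g\|_{L^t}/s)^q$ (for which the covering above does suffice) for \emph{two} distinct admissible pairs $(t,q)$ lying on the scaling line $a=\tfrac qt(n-t)$ with $a$ held fixed, and then Marcinkiewicz interpolation with the fixed measure $\m$ on the target space upgrades these to the strong‑type estimate at the intermediate exponent. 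Steps~1 and~2 are routine; Step~3 is the substantive content, and in the paper it is invoked in the form proved by Adams.
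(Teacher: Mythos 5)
Two preliminary remarks. First, the paper offers no proof of this lemma: it is quoted as a known trace inequality of D.R.~Adams \cite{Ad} (with Maz'ya \cite{Mz1,Mz2} cited for generalizations), so there is no internal argument to compare yours against; the only question is whether your reconstruction is sound. Second, your Steps~1 and~2 are correct and standard: the representation $|v|\le c_n I_1(|Dv|)$, the reduction to $\|I_1g\|_{L^q(d\m)}\le CA^{1/q}\|g\|_{L^t}$, the Hedberg bound $I_1g\le C(Mg)^{1-t/n}\|g\|_{L^t}^{t/n}$, and the exponent arithmetic $q(n-t)/n=at/n$ are all right.

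The genuine gap is the parenthetical claim in Step~3 that the Vitali covering ``does suffice'' for the weak-type inequality $\m(\{I_1g>s\})\le CA(\|g\|_{L^t}/s)^q$. Run it: Hedberg places $\{I_1g>s\}$ inside $\{Mg>\lambda\}$, the covering produces disjoint balls with $r_i^{n/t}\le C\lambda^{-1}\|g\|_{L^t(B_i)}$, hence $\m(\{I_1g>s\})\le CA\sum_i r_i^a\le CA\lambda^{-at/n}\sum_i\|g\|_{L^t(B_i)}^{at/n}$, and the last sum is controlled by $\|g\|_{L^t}^{at/n}$ through disjointness only when $at/n\ge t$, i.e.\ $a\ge n$ (equivalently $q\ge t^*$). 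This is the very obstruction you already identified for the strong type, and interpolation cannot cure it: the condition $a\ge n$ does not depend on the pair $(t,q)$, so the weak-type bound cannot be established by this covering at \emph{any} pair on the scaling line when $a<n$. That is precisely the regime in which the paper invokes the lemma (all three applications have $a<n$, e.g.\ $a=n-p+\delta$ with $\delta\le p$, so $q<p^*$), so as written your Step~3 does not prove the statement where it is needed. The standard repair --- and, in substance, Adams' actual argument --- is to run the Hedberg splitting against the \emph{fractional} maximal function $M_\beta g(x)=\sup_{r>0}r^{\beta-n}\int_{B_r(x)}|g|$ with $\beta\in\bigl(\tfrac{n-a}{t},1\bigr)$ (such $\beta$ exists since $a>n-t$), which gives $I_1g\le C(M_\beta g)^{\frac{n/t-1}{n/t-\beta}}\|g\|_{L^t}^{\frac{1-\beta}{n/t-\beta}}$. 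For $M_\beta$ the covering argument yields the weak type $L^t(dx)\to L^{\tilde q,\infty}(d\m)$ with $\tilde q=\frac{at}{n-\beta t}>t$, so the summation over disjoint balls closes; Marcinkiewicz interpolation along the scaling line upgrades this to the strong type for $M_\beta$, and substituting back produces the total exponent $q$ on $\|g\|_{L^t}$. (Alternatively, the weak type for $I_1$ itself follows from the duality estimate $s\,\m(E_s)\le\int g\,I_1(\m|_{E_s})\,dx\le\|g\|_{L^t}\|I_1(\m|_{E_s})\|_{L^{t'}}$ combined with the growth hypothesis on $\m$; this is Maz'ya's route.) So your outline has the right architecture, but the mechanism you attribute to the key step fails exactly in the case the paper uses.
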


The second tool in obtaining the estimate \eqref{EST} is a sort of maximal principle which goes back to Ph.~Hartman and G.~Stampacchia~\cite{HS} (see also \cite[Chapter~II, Lemma~5.1]{LU}).
\begin{lem}\label{lem3}  Let $\tau\colon\R\to[0,\infty)$ be a non-increasing function with the property that there exist constants $C>0,$ $k_0\geq0,$ $\nu>0$ and $\sigma\in[0,1+\nu]$ such that
$$
\int_k^\infty \tau(t)\;dt \leq C k^\sigma \big(\tau(k)\big)^{1+\nu}\quad \forall\ k\geq k_0.
$$
Then $\tau$ obeys the {\em finite time extinction property,} that is, there exists a number $k_{\max},$ depending on $C,$ $k_0,$ $\nu,$ $\sigma$ and $\int_{k_0}^\infty \tau(t)\;dt,$ such that
$$
\tau(k)=0\quad \forall\ k\geq k_{\max}.
$$
\end{lem}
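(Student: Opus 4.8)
\emph{Plan.} The statement is a continuous-variable form of the De~Giorgi/Hartman--Stampacchia iteration lemma, and I would prove it by converting the assumed integral inequality into a first-order differential inequality for the tail
\[
G(k):=\int_k^\infty \tau(t)\,dt
\]
and then integrating it. First I record the elementary properties of $G$. Since $\tau$ is non-increasing with values in $[0,\infty)$, it is locally bounded, hence locally integrable, so $G$ is well defined, non-negative, non-increasing and locally absolutely continuous on any interval where it is finite, with $G'(k)=-\tau(k)$ at every continuity point of $\tau$ (hence for a.e.\ $k$). The hypothesis forces $G(k)\le C k^\sigma\big(\tau(k)\big)^{1+\nu}<\infty$ for every $k\ge k_0$, so $G$ is finite on $[k_0,\infty)$. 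Replacing $k_0$ by $\max\{k_0,1\}$ if necessary (the assumed inequality survives this), I may and do assume $k_0\ge 1$.

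Next I split into two cases. If $G(k^\ast)=0$ for some $k^\ast\ge k_0$, then $\tau=0$ a.e.\ on $[k^\ast,\infty)$ and, $\tau$ being non-increasing and non-negative, $\tau(k)=0$ for every $k>k^\ast$; then the conclusion holds with, e.g., $k_{\max}=k^\ast+1$. So it remains to exclude the case $G(k)>0$ for all $k\ge k_0$ (in which event the hypothesis also yields $\tau(k)>0$ there). Put $\beta:=\tfrac{1}{1+\nu}\in(0,1)$. Solving the hypothesis for $\tau(k)=-G'(k)$ gives $-G'(k)\ge C^{-\beta}k^{-\sigma\beta}\big(G(k)\big)^{\beta}$ for a.e.\ $k\ge k_0$; dividing by $\big(G(k)\big)^{\beta}>0$ and using that $k\mapsto\big(G(k)\big)^{1-\beta}$ is locally absolutely continuous (being a $C^1$ function of the locally absolutely continuous, locally positive $G$), this becomes
\[
-\frac{d}{dk}\Big[\big(G(k)\big)^{1-\beta}\Big]\ \ge\ (1-\beta)\,C^{-\beta}\,k^{-\sigma\beta}\qquad\text{for a.e. }k\ge k_0 .
\]

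Integrating this from $k_0$ to $K$ and discarding the non-negative term $\big(G(K)\big)^{1-\beta}$ yields
\[
\Big(\int_{k_0}^\infty\tau(t)\,dt\Big)^{1-\beta}=\big(G(k_0)\big)^{1-\beta}\ \ge\ (1-\beta)\,C^{-\beta}\int_{k_0}^{K}t^{-\sigma\beta}\,dt\qquad\text{for all }K\ge k_0 .
\]
This is exactly where the assumption $\sigma\le 1+\nu$, i.e.\ $\sigma\beta\le 1$, is used: the right-hand side tends to $+\infty$ as $K\to\infty$ (it grows like $K^{1-\sigma\beta}$ when $\sigma\beta<1$ and like $\log K$ when $\sigma\beta=1$), whereas the left-hand side is a fixed finite constant — a contradiction. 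Hence $G$ must vanish somewhere, we are in the first case, and moreover one may take $k_{\max}$ to be $1$ plus the value $k^\ast$ at which $(1-\beta)C^{-\beta}\int_{k_0}^{k^\ast}t^{-\sigma\beta}\,dt$ first reaches $\big(\int_{k_0}^\infty\tau\big)^{1-\beta}$, a quantity depending only on $C$, $k_0$, $\nu$, $\sigma$ and $\int_{k_0}^\infty\tau(t)\,dt$. The one point that needs a little care — the main (mild) obstacle — is the justification of the chain rule and the fundamental theorem of calculus for $G$ and $\big(G\big)^{1-\beta}$ in the presence of the jumps of $\tau$ and the possible vanishing of $G$; this is handled by performing the integration only on $[k_0,k^\ast)$, where $G$ is continuous and bounded away from $0$, with $k^\ast$ the first zero of the continuous function $G$, and then letting $K\uparrow k^\ast$.
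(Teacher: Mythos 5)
Your proof is correct. Note that the paper itself does not prove Lemma~3.2 at all: it is quoted as a known maximum-principle-type result with references to Hartman--Stampacchia and to \cite[Chapter~II, Lemma~5.1]{LU}, so there is no in-paper argument to compare against. Your route --- passing to the tail $G(k)=\int_k^\infty\tau$, turning the hypothesis into the differential inequality $-\frac{d}{dk}G^{1-\beta}\ge(1-\beta)C^{-\beta}k^{-\sigma\beta}$ with $\beta=\frac{1}{1+\nu}$, and deriving a contradiction from the divergence of $\int^\infty k^{-\sigma\beta}\,dk$ when $\sigma\beta\le1$ --- is the standard continuous (ODE-comparison) proof of this lemma, as opposed to the discrete level-iteration one sometimes sees in the Stampacchia-type variants. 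The delicate points are exactly the ones you flag and handle correctly: normalizing $k_0\ge1$ so that $t^{-\sigma\beta}$ is integrable near the left endpoint and the logarithm in the borderline case $\sigma\beta=1$ makes sense; observing that the hypothesis itself forces $G(k)<\infty$ and $\tau(k)>0$ wherever $G(k)>0$; and restricting the integration to $[k_0,k^\ast)$, where $G$ is absolutely continuous and bounded away from zero, so that the chain rule for $G^{1-\beta}$ and the fundamental theorem of calculus are legitimate despite the jumps of $\tau$. The resulting explicit bound on $k^\ast$ depends only on $C$, $k_0$, $\nu$, $\sigma$ and $\int_{k_0}^\infty\tau(t)\,dt$, as the statement requires.
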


\subsection{Higher gradient integrability} 
The regularity theory of nonlinear elliptic systems is more delicate matter in comparison with that of single equations, and the unique positive regularity result which holds true also for systems is the better integrability of the gradient in the spirit of  Gehring--Giaquinta--Modica. The weak solution of \eqref{P} obeys that property under fairly general controlled growth hypotheses \eqref{CG} and \textit{general} coercivity of the operator considered.
\begin{lem}\label{lem4}
Assume \eqref{A}, \eqref{CG} with
$\varphi\in L^r(\Omega),$ $r>\frac{p}{p-1};$ $
\psi \in L^s(\Omega),$ $s>\frac{np}{np+p-n},$ together with
\begin{equation}\label{CC}
\bA(x,\bz,\bxi)\cdot \bxi:=\sum_{\alpha=1}^N\sum_{i=1}^n a^\alpha_i(x,\bz,\bxi)\xi^\alpha_i
 \geq \varkappa |\bxi|^p-\Lambda |\bz|^{p^*} -\Lambda \varphi^{\frac{p}{p-1}}(x)
\end{equation}
for almost all $x\in\Omega,$ all $\bz\in \R^N$ and all $\bxi\in \R^{N\times n}.$

Let $\bu\in W^{1,p}_0(\Omega;\R^N)$ be a weak solution of the system \eqref{P}. Then there is a number $\varepsilon>0$ such that $D\bu\in L^q(\Omega;\R^{N\times n})$ for all $q\in(p,p+\varepsilon).$
\end{lem}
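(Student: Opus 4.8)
The plan is to derive a reverse Hölder inequality for $|D\bu|^p$ and then invoke the classical Gehring–Giaquinta–Modica lemma to upgrade integrability. First I would fix a ball $B_{2R}(x_0)$ (either interior to $\Omega$ or centered at a boundary point, using the Dirichlet condition $\bu\in W^{1,p}_0$) and test the weak formulation \eqref{WS} componentwise with $\bv=(0,\ldots,\eta^p(u^\alpha-c^\alpha),\ldots,0)$, where $\eta$ is a standard cutoff equal to $1$ on $B_R(x_0)$, supported in $B_{2R}(x_0)$, with $|D\eta|\le C/R$, and $c^\alpha$ is the average of $u^\alpha$ over the annulus (chosen as $0$ for boundary balls). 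Summing over $\alpha$ and using the full coercivity \eqref{CC} on the principal term, together with the controlled growth bounds $\eqref{CG}_1$ on $|a^\alpha_i|$ and $\eqref{CG}_2$ on $|b^\alpha|$ to estimate the terms coming from $D(\eta^p)$ and from the lower order contribution, the standard absorption (Young's inequality to move a small multiple of $\int \eta^p|D\bu|^p$ to the left) yields a Caccioppoli-type inequality of the form
\begin{equation*}
\fint_{B_R(x_0)}|D\bu|^p\,dx \le C\Bigg(\fint_{B_{2R}(x_0)}\Big|\frac{\bu-\bc}{R}\Big|^p dx + \text{(terms in }\varphi,\psi,|\bu|)\Bigg).
\end{equation*}

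Next I would convert the right-hand side into something sublinear in the $L^p$-norm of $D\bu$. The first term on the right is handled by the Sobolev–Poincaré inequality: $\fint_{B_{2R}}|(\bu-\bc)/R|^p \le C\big(\fint_{B_{2R}}|D\bu|^{p_*}\big)^{p/p_*}$ with $p_*=\frac{np}{n+p}<p$, which is exactly the shape needed for a reverse Hölder inequality. The contributions involving $|\bu|^{p^*}$ (from coercivity and from the growth of $a^\alpha_i$, $b^\alpha$) are controlled by the Sobolev embedding $W^{1,p}_0\hookrightarrow L^{p^*}$ applied on the ball, again producing a power of a lower integral of $|D\bu|$; here one must be slightly careful to localize the embedding using $\eta$ and absorb the cross terms. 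The terms with $\varphi\in L^r$, $\psi\in L^s$ are genuine data terms: since $r>p/(p-1)$ and $s>\frac{np}{np+p-n}$ strictly, the functions $\varphi^{p/(p-1)}$ and the relevant power of $\psi$ lie in $L^{\theta}$ for some $\theta>1$, so after normalizing by $|B_R|$ they appear as $\big(\fint_{B_{2R}}(\cdots)^{\theta}\big)^{1/\theta}$ with $\theta>1$ — precisely the "$g\in L^\theta$, $\theta>1$" forcing term in Gehring's lemma. Collecting everything gives
\begin{equation*}
\fint_{B_R(x_0)}|D\bu|^p\,dx \le C\left(\fint_{B_{2R}(x_0)}|D\bu|^{p_*}\,dx\right)^{p/p_*} + C\fint_{B_{2R}(x_0)} h\,dx,
\end{equation*}
with $p_*<p$ and $h\in L^{\theta}_{\mathrm{loc}}$ for some $\theta>1$ depending on $r,s$.

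Finally I would apply the Gehring–Giaquinta–Modica self-improving lemma (in its version valid up to the boundary, which is why the upper bound in the measure density condition \eqref{A} is invoked — it guarantees that balls centered at $\partial\Omega$ behave like interior balls for the Poincaré inequality applied to the zero-extension of $\bu$): the above reverse Hölder inequality implies the existence of $\varepsilon>0$ such that $|D\bu|^p\in L^{1+\varepsilon/p}_{\mathrm{loc}}$, i.e. $D\bu\in L^q(\Omega;\R^{N\times n})$ for every $q\in(p,p+\varepsilon)$, with $\varepsilon$ depending only on the known quantities. The main obstacle is the bookkeeping in the Caccioppoli step: ensuring that every term produced by the lower order bound $\eqref{CG}_2$ — in particular the $|\bxi|^{p(p^*-1)/p^*}$ growth multiplied against $\eta^p|u^\alpha-c^\alpha|$ — can, after Young's inequality, be split into a small multiple of $\int\eta^p|D\bu|^p$ (absorbed left) plus an admissible forcing term, and that the exponents match up so that the resulting integrable surplus $\theta>1$ is uniform; this is the delicate point because the exponent $p(p^*-1)/p^*$ is close to $p$. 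Since full coercivity \eqref{CC} (not merely the componentwise version \eqref{CWC}) is assumed here, no structural difficulty arises from the coupling, and the argument runs in parallel with the classical treatment for systems.
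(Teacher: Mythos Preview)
Your plan is correct and coincides with the paper's approach: the paper does not give a detailed proof of Lemma~\ref{lem4} but simply refers to the reverse H\"older/Gehring argument in \cite[Chapter~V, Theorem~2.2]{G}, which is exactly the Caccioppoli--Sobolev--Poincar\'e--Gehring scheme you outline, including the use of \eqref{A} for boundary balls. One minor sharpening: the $|\bu|^{p^*}$ contributions are not reduced to a sub-$p$ integral of $D\bu$ but are absorbed via the smallness of $\int_{B_{2R}}|D\bu|^p$ on small balls, which is precisely why (cf.~Remark~\ref{rem-HI}) the resulting $\varepsilon$ depends on the uniform integrability of $|D\bu|^p$.
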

 
The proof of Lemma \ref{lem4} relies heavily on the \textit{reverse H\"older inequality} and repeats the lines of the proof of \cite[Theorem~2.2, Chapter~V]{G}, where its variant in the case $p=2$ is given (see also the discussion about the boundary higher integrability on pages 151--154 of \cite{G}).

\begin{rem}\label{rem-HI}
It is to be noted that the number $\varepsilon$ and the $L^q(\Omega;\R^{N\times n})$-bound for  the gradient $D\bu$ in Lemma~\ref{lem4} depend on  known quantities and also on the solution $\bu$ itself. More precisely, the dependence on $\bu$ relies on the uniform integrability of $|D\bu|^p$ over $\Omega$ in case $n\geq3,$ and on $\|D\bu\|_{L^p(\Omega;\R^{N\times n})}$ when $n=2.$

Moreover, in the particular case $p=n,$ Lemma~\ref{lem4} yields $\bu\in W^{1,n+\varepsilon}(\Omega;\R^N)$ whence $\bu$ is automatically globally bounded and H\"older continuous in $\ol\Omega$ by the Sobolev imbedding theorem.
\end{rem}

It is clear that, because of 
$$
|\bxi|^p\leq \left(\sum_{\alpha=1}^N |\bxi^\alpha|\right)^p\leq N^{p-1} \sum_{\alpha=1}^N |\bxi^\alpha|^p,
$$
\eqref{CWC} implies \eqref{CC} with new constants $\varkappa$ and $\Lambda$ depending on $N.$ 
Similarly, \eqref{CG} are automatically satisfied if \eqref{CWCG} hold. This means that 
 Lemma~\ref{lem4} applies to \eqref{P} under the hypotheses of Theorem~\ref{thm}, and combining it  with the Sobolev imbedding theorem, we get the existence of an exponent $p_0>p$ such that any weak solution
$\bu\in W^{1,p}_0(\Omega;\R^N)$ of \eqref{P} belongs to $W^{1,p_0}(\Omega;\R^N)\cap L^{p_0^*}(\Omega;\R^N)$ and
\begin{equation}\label{1}
\|\bu\|_{L^{p_0^*}(\Omega;\R^N)}+
\|D\bu\|_{L^{p_0}(\Omega;\R^{N\times n})}\leq C
\end{equation}
with $C$ depending on the quantities listed above.

\subsection{Decay estimates for the total mass of the solution}

Without loss of generality, we suppose that the functions $\varphi$ and $\psi$ are extended as zero outside $\Omega.$

\textit{Case 1: $p<n.$} Define the measure
$$
d\m= \left(\chi_\Omega(x)+\varphi^\frac{p}{p-1}(x)+\psi(x)+|\bu(x)|^{\frac{p^2}{n-p}}\right)dx
$$
with $\chi_\Omega$ being the characteristic function of $\Omega$  and the Lebesgue measure $dx.$

We have
$$
\int_{B_\rho} \chi_\Omega(x)\;dx=|B_\rho\cap\Omega|\leq (1-A_\Omega)\rho^n=(1-A_\Omega)\rho^{n-p+p}
$$
for any ball $B_\rho$ as consequence of \eqref{A}. Later on, using the hypotheses \eqref{M}, it is easy to check that
\begin{align*}
\int_{B_\rho} \varphi^\frac{p}{p-1}(x)\;dx \leq&\ 
\rho^{n-p+\left(p-\frac{p(n-\lambda)}{r(p-1)}\right)}
\|\varphi\|^\frac{p}{p-1}_{L^{r,\lambda}(\Omega)},\\
\int_{B_\rho} \psi(x)\;dx \leq&\ 
\rho^{n-p+\left(p-\frac{n-\mu}{s}\right)}
\|\psi\|_{L^{s,\mu}(\Omega)}
\end{align*}
where $p-\frac{p(n-\lambda)}{r(p-1)}>0$ and
$p-\frac{n-\mu}{s}>0$ as it follows from $(p-1)r+\lambda>n$ and $ps+\mu>n.$
Finally, the H\"older inequality and \eqref{1} imply
\begin{equation}\label{1'}
\int_{B\rho} |\bu(x)|^{\frac{p^2}{n-p}}\;dx \leq
\rho^{n-p+\left(p-\frac{np^2}{(n-p)p_0^*}\right)}
\|\bu\|_{L^{p_0^*}(\Omega;\R^N)}\leq 
 C \rho^{n-p+\left(p-\frac{np^2}{(n-p)p_0^*}\right)}
\end{equation}
with $p-\frac{np^2}{(n-p)p_0^*}>0$ since
$p_0^*>p^*=\frac{np}{n-p}.$

At this point, defining
$$
\delta:=\min\left\{p,p-\dfrac{p(n-\lambda)}{r(p-1)},p-\dfrac{n-\mu}{s},p-\dfrac{np^2}{(n-p)p_0^*} \right\}>0,
$$
it turns out that
\begin{equation}\label{2}
\m(B_\rho)\leq A\rho^{n-p+\delta},
\end{equation}
where $A$ is a constant depending on the data of \eqref{P} and on $\bu$ in the sense of Remark~\ref{rem-HI}.

We define now, for arbitrary $k\geq1$ and any
$\alpha\in\{1,\ldots,N\},$ the functions
\begin{align*}
W(x):=&\ \max\big\{|u^1(x)|,\ldots,|u^N(x)|\big\},\\
w(x):=&\ \max\big\{W(x)-k,0\big\},\\
w^\alpha(x):=&\ \max\big\{|u^\alpha(x)|-k,0\big\}
\end{align*}
and the corresponding super-level sets
\begin{align*}
\Omega_k:=&\ \big\{x\in\Omega\colon\ W(x)>k\big\}\equiv\big\{x\in\Omega\colon\ w(x)>0\big\},\\
\Omega_k^\alpha=&\ \big\{x\in\Omega\colon\ |u^\alpha(x)|>k\big\}\equiv\big\{x\in\Omega\colon\ w^\alpha(x)>0\big\}.
\end{align*}
We have that $u^\alpha\in W^{1,p}_0(\Omega)\cap W^{1,p_0}(\Omega)$ by \eqref{1} and therefore, since the Sobolev spaces are lattices, $W,w\in W^{1,p}_0(\Omega)\cap W^{1,p_0}(\Omega)$ as well.

Further on, $w\equiv0$ on $\Omega\sm\Omega_k,$ $w\in W^{1,p}_0(\Omega)$ and the H\"older inequality leads to
$$
\int_\Omega w(x)\;d\m=\int_{\Omega_k} w(x)\;d\m \leq
\left(\int_{\Omega_k} d\m\right)^{1-1/q} 
\left(\int_{\Omega_k} |w(x)|^q\;d\m\right)^{1/q}
$$
with suitable $q>1.$ Thanks to \eqref{2}, we can estimate the $L^q$-norm of $w$ by means of the  trace inequality, Lemma~\ref{lem2}.
In particular, we choose
$$
t=p,\quad q=\dfrac{p(n-p+\delta)}{n-p},\quad a=n-p+\delta
$$
there, in order to get
\begin{equation}\label{3}
\int_{\Omega_k} w(x)\;d\m \leq C \big(\m(\Omega_k)\big)^{1-\frac{n-p}{p(n-p+\delta)}}\left( \int_{\Omega_k} |Dw(x)|^p\;dx\right)^{1/p}.
\end{equation}

We will use the structure hypotheses \eqref{CWC}, \eqref{CWCG}, \eqref{SC} and \eqref{M} in order to estimate the $p$-energy of $w$ on the right-hand side of \eqref{3}, and the fact that $\bu\in W^{1,p}_0(\Omega;\R^N)$ is a weak solution of \eqref{P}. 

For this goal consider the function $
\bv=\big(v^1,\ldots,v^N\big)\colon\ \Omega\to\R^N$ with components 
$$
v^\alpha(x):=w^\alpha(x){.}\text{sign\,}u^\alpha(x).
$$
It is clear that $\bv\in W^{1,p}_0(\Omega;\R^N)$ and $Dv^\alpha=Dw^\alpha{.}\text{sign\,}u^\alpha=Du^\alpha$ for a.a. $x\in\Omega_k^\alpha.$ Using $\bv$ as a test function in \eqref{WS} and taking into account that $v^\alpha\equiv0$ on $\Omega\sm\Omega_k^\alpha,$ we get
\begin{align}\label{4}
&\sum_{\alpha=1}^N\int_{\Omega_k^\alpha}
\sum_{i=1}^n
a_i^\alpha\big(x,\bu(x),D\bu(x)\big) D_iv^\alpha(x)\;dx\\
\nonumber
&\qquad\qquad
+\sum_{\alpha=1}^N\int_{\Omega_k^\alpha}  b^\alpha\big(x,\bu(x),D\bu(x)\big)v^\alpha(x)\;dx=0.
\end{align}

The first integral will be estimated from below with the aid of the componentwise coercivity condition \eqref{CWC} and the fact that
$Dv^\alpha(x)=Du^\alpha(x)$ a.e. on $\Omega_k^\alpha:$ 
\begin{align}\label{5}
&\sum_{\alpha=1}^N\int_{\Omega_k^\alpha}
\sum_{i=1}^n
a_i^\alpha\big(x,\bu(x),D\bu(x)\big) D_iv^\alpha(x)\;dx\\
\nonumber
&\qquad\qquad
=\sum_{\alpha=1}^N\int_{\Omega_k^\alpha}
\sum_{i=1}^n
a_i^\alpha\big(x,\bu(x),D\bu(x)\big) D_iu^\alpha(x)\;dx\\
\nonumber
&\qquad\qquad
\geq \varkappa \sum_{\alpha=1}^N\int_{\Omega_k^\alpha}  |Du^\alpha(x)|^p\;dx\\
\nonumber
&\qquad\qquad\quad
 -\Lambda N\left(
\int_{\Omega_k^\alpha}  |\bu(x)|^{p^*}\;dx +
\int_{\Omega_k^\alpha}  \varphi^\frac{p}{p-1}(x)\;dx\right).
\end{align}

To estimate the second integral in \eqref{4} from below in case when \eqref{CWCG} is satisfied, we will bound first the absolute value of the integrand. Thus, using 
\begin{equation}\label{5'}
0<\dfrac{|u^\alpha(x)|-k}{|u^\alpha(x)|}<1\quad
\text{a.e.\ in}\ \Omega_k^\alpha,
\end{equation}
\eqref{CWCG} and the Young inequality, we get
\begin{align*}
\big|b^\alpha(x,\bu(x),&D\bu(x))v^\alpha(x)\big|
= \big|b^\alpha(x,\bu(x),D\bu(x))\big|w^\alpha(x)\\
=&\ \big|b^\alpha(x,\bu(x),D\bu(x))\big|{.}|u^\alpha(x)|\dfrac{|u^\alpha(x)|-k}{|u^\alpha(x)|}\\
\leq&\ \Lambda|u^\alpha(x)|\left(
\psi(x)+|\bu(x)|^{p^*-1}+
|Du^\alpha(x)|^{\frac{p(p^*-1)}{p^*}}\right)\\
\leq&\ \Lambda\left(
\psi(x)|u^\alpha(x)|+C(\varepsilon)|\bu(x)|^{p^*}+\varepsilon
|Du^\alpha(x)|^{p}\right)
\end{align*}
for a.a. $x\in \Omega_k^\alpha$ and with arbitrary $\varepsilon>0$ to be chosen later.

This way,
\begin{align}\label{5''}
&\sum_{\alpha=1}^N\int_{\Omega_k^\alpha} 
b^\alpha(x,\bu(x),D\bu(x))v^\alpha(x)\;dx\\
\nonumber
& \qquad\geq
-\Lambda \sum_{\alpha=1}^N\int_{\Omega_k^\alpha}
\psi(x)|u^\alpha(x)|\;dx
-\Lambda N C(\varepsilon)
\int_{\Omega_k^\alpha}|\bu(x)|^{p^*}dx\\
\nonumber
&\qquad\quad
-\Lambda\varepsilon\sum_{\alpha=1}^N
\int_{\Omega_k^\alpha}
|Du^\alpha(x)|^{p}\;dx.
\end{align}
Making use of \eqref{5'} and the Young inequality, the same bound follows also if, instead of \eqref{CWCG},  
the controlled growth assumptions \eqref{CG}
are verified together with the sign condition \eqref{SC}.

We employ now \eqref{5} and \eqref{5''} into \eqref{4}, and a suitable choice of $\varepsilon>0$ yields the basic energy estimate
\begin{align}\label{6}
\sum_{\alpha=1}^N\int_{\Omega_k^\alpha}
|Du^\alpha(x)|^{p}\;dx\leq&\ C\left(
\int_{\Omega_k^\alpha}  \varphi^\frac{p}{p-1}(x)\;dx
+\sum_{\alpha=1}^N\int_{\Omega_k^\alpha} \psi(x)|u^\alpha(x)|\;dx\right.\\
\nonumber
&\qquad \left.
+ \int_{\Omega_k^\alpha}|\bu(x)|^{p^*}\;dx
\right).
\end{align}

Our aim now is to estimate the right-hand side of \eqref{6} in terms of suitable power of the level $k$ multiplied by the $\m$-measure of the level set $\Omega_k.$ For, the definition of the measure $\m$ gives immediately
\begin{equation}\label{7}
\int_{\Omega_k^\alpha}  \varphi^\frac{p}{p-1}(x)\;dx\leq \int_{\Omega_k}  \varphi^\frac{p}{p-1}(x)\;dx \leq \m(\Omega_k)
\end{equation}
since $\Omega_k^\alpha\subset \Omega_k.$

We have further
\begin{align}\label{8}
\int_{\Omega_k^\alpha} \psi(x)|u^\alpha(x)|\;dx =&\ \int_{\Omega_k^\alpha} \psi(x)\big(|u^\alpha(x)|-k+k\big)\;dx\\
\nonumber
\leq &\ \int_{\Omega_k^\alpha} \psi(x)|w^\alpha(x)|\;dx + k\int_{\Omega_k^\alpha} \psi(x)\;dx\\
\nonumber
\leq &\ \int_{\Omega_k^\alpha} \psi(x)|v^\alpha(x)|\;dx + k\int_{\Omega_k} \psi(x)\;dx\\
\nonumber
\leq &\ \int_{\Omega_k^\alpha} \psi(x)|v^\alpha(x)|\;dx + k\m(\Omega_k). 
\end{align}
Setting $d\m':=\psi(x)\;dx,$ it follows from $\psi\in L^{s,\mu}(\Omega)$ that 
$$
\m'(B_\rho)\leq \|\psi\|_{L^{s,\mu}(\Omega)}
\rho^{n-\frac{n-\mu}{s}}
$$
for any ball $B_\rho.$ Therefore, Lemma~\ref{lem2} applied with $t=p$ and $q=\frac{p\left(n-\frac{n-\mu}{s}\right)}{n-p}$
(recall $ps+\mu>n$ by \eqref{M}), $Dv^\alpha=Du^\alpha$ on $\Omega_k^\alpha,$ and Young's inequality yield
\begin{align*}
\int_{\Omega_k^\alpha} \psi(x)|v^\alpha(x)|\;dx =&\ \int_{\Omega_k^\alpha} |v^\alpha(x)|\;d\m'\leq \left( \int_{\Omega_k^\alpha} d\m'\right)^{1-\frac{1}{q}}\left( \int_{\Omega_k^\alpha} |v^\alpha(x)|^q\;d\m'\right)^\frac{1}{q}\\
\leq &\ C\big(\m'(\Omega_k^\alpha)\big)^{1-\frac{1}{q}}
\left( \int_{\Omega_k^\alpha} |Dv^\alpha(x)|^p\;dx\right)^\frac{1}{p}\\
\leq &\ C\big(\m'(\Omega_k)\big)^{1-\frac{1}{q}}
\left( \int_{\Omega_k^\alpha} |Du^\alpha(x)|^p\;dx\right)^\frac{1}{p}\\
\leq &\ \varepsilon \int_{\Omega_k^\alpha} |Du^\alpha(x)|^p\;dx +C(\varepsilon)
\big(\m'(\Omega_k)\big)^{\frac{q-1}{q}\frac{p}{p-1}}
\end{align*}
with arbitrary $\varepsilon>0.$ Further on,
$$
\big(\m'(\Omega_k)\big)^{\frac{q-1}{q}\frac{p}{p-1}}\leq 
\big(\m(\Omega_k)\big)^{\frac{q-1}{q}\frac{p}{p-1}}\leq
\big(\m(\Omega)\big)^{\frac{q-1}{q}\frac{p}{p-1}-1} \m(\Omega_k),
$$
while
$$
\m(\Omega)\leq |\Omega|+C\left(\|\varphi\|^\frac{p}{p-1}_{L^{r,\lambda}(\Omega)} +\|\psi\|_{L^{s,\mu}(\Omega)}+\|\bu\|^\frac{p^2}{n-p}_{L^{p_0^*}(\Omega;\R^N)}\right)
$$
whence $\m(\Omega)$ is bounded in terms of known quantities and $\|D\bu\|_{L^p(\Omega;\R^{N\times n})}$ as follows from \eqref{1} and Remark~\ref{rem-HI}. Summarizing, \eqref{8} becomes
\begin{equation}\label{9}
\int_{\Omega_k^\alpha} \psi(x)|u^\alpha(x)|\;dx 
\leq \varepsilon \int_{\Omega_k^\alpha} |Du^\alpha(x)|^p\;dx + C(\varepsilon) k \m(\Omega_k)
\end{equation}
with arbitrary $\varepsilon>0$ to be chosen later.

Regarding the third term in \eqref{6}, 
we use $\Omega_k^\alpha\subset\Omega_k,$ $|\bu(x)|\leq N^{p/2}W(x)$ and $W(x)=w(x)+k$ a.e. on $\Omega_k,$ in order to conclude that
\begin{align*}
|\bu(x)|^{p^*}=&\ |\bu(x)|^{p} |\bu(x)|^{p^*-p} \\
\leq &\ C(N,p) |W(x)|^{p} |\bu(x)|^{\frac{p^2}{n-p}}\\
= &\ C(N,p) \big|W(x)-k+k\big|^{p} |\bu(x)|^{\frac{p^2}{n-p}}\\
\leq &\ C(N,p)2^{p-1}\left( |w(x)|^{p} |\bu(x)|^{\frac{p^2}{n-p}}+k^p |\bu(x)|^{\frac{p^2}{n-p}}\right)
\end{align*}
for a.a. $x\in\Omega_k.$ Therefore
\begin{align*}
\int_{\Omega_k^\alpha} |\bu(x)|^{p^*}dx\leq&\
	\int_{\Omega_k} |\bu(x)|^{p^*}dx\\
\leq &\ C\left(\int_{\Omega_k} |w(x)|^{p} |\bu(x)|^{\frac{p^2}{n-p}}\;dx+k^p \m(\Omega_k)\right)
\end{align*}
and the integral on the right will be estimated once again with the help of Lemma~\ref{lem2}.

For, remembering \eqref{1'} and having in mind
$$
n-p+\left(p-\dfrac{np^2}{(n-p)p_0^*}\right)>n-p,
$$
we pick a number $t<p$ such that
$$
n-p+\left(p-\dfrac{np^2}{(n-p)p_0^*}\right)>\dfrac{p}{t}(n-t)>n-p,
$$
and then \eqref{1'} yields
$$
\int_{B_\rho} |\bu(x)|^\frac{p^2}{n-p}\;dx \leq C\left(n,p,\mathrm{diam}\,\Omega,\|\bu\|_{L^{p_0^*}(\Omega;\R^N)}\right) \rho^{\frac{p}{t}(n-t)}.
$$
Defining the measure $d\m'':=|\bu(x)|^\frac{p^2}{n-p}\;dx$ and taking $q=p$ in Lemma~\ref{lem2}, we get
$$
\int_{\Omega_k} |w(x)|^p|\bu(x)|^\frac{p^2}{n-p}\;dx=\int_{\Omega_k} |w(x)|^p\;d\m''\leq C\left(
\int_{\Omega_k} |Dw(x)|^t\;dx
\right)^{\frac{p}{t}}
$$
since $w\in W^{1,p}_0(\Omega_k),$
and the H\"older inequality gives
$$
\int_{\Omega_k} |w(x)|^p|\bu(x)|^\frac{p^2}{n-p}\;dx\leq C |\Omega_k|^{\frac{p}{t}-1}
\int_{\Omega_k} |Dw(x)|^p\;dx.
$$
This way, the last term in \eqref{6} estimates as
\begin{equation}\label{10}
\int_{\Omega_k^\alpha}|\bu(x)|^{p^*}dx\leq C\left(k^p\m(\Omega_k)+
 |\Omega_k|^{\frac{p}{t}-1}
\int_{\Omega_k} |Dw(x)|^p\;dx
\right).
\end{equation}

Employing \eqref{7}, \eqref{9} and \eqref{10} into \eqref{6}, remembering $k\geq1,$ and choosing $\varepsilon$ small enough in \eqref{9}, we obtain
\begin{equation}\label{11a}
\sum_{\alpha=1}^N
\int_{\Omega_k^\alpha} |Du^\alpha(x)|^p\;dx
\leq C\left(|\Omega_k|^{\frac{p}{t}-1}
\int_{\Omega_k} |Dw(x)|^p\;dx
+k^p\m(\Omega_k)\right).
\end{equation}
Before proceed further, we note that the term on the left-hand side can be estimated from below by $\int_{\Omega_k} |Dw(x)|^p\;dx.$ In fact, $W(x)=|u^\alpha(x)|$ for some $\alpha$ implies
$|DW(x)|=|Du^\alpha(x)|.$ Moreover, if $x\in\Omega_k$ and $W(x)=|u^\alpha(x)|$ then $|u^\alpha(x)|>k$ whence $x\in\Omega_k^\alpha.$ If $x\in\Omega_k^\alpha$ for some $\alpha$ then $|u^\alpha(x)|>k$ and thus $W(x)>k$ as well, that means $x\in\Omega_k.$ Therefore $\Omega_k=\cup_{\alpha=1}^N \Omega_k^\alpha$ and
\begin{align*}
\sum_{\alpha=1}^N
\int_{\Omega_k^\alpha} |Du^\alpha(x)|^p\;dx \geq&\ \sum_{\alpha=1}^N
\int_{\{x\in\Omega_k\colon\ W(x)=|u^\alpha(x)|\}} |Du^\alpha(x)|^p\;dx\\
\geq &\ \int_{\Omega_k} |DW(x)|^p\;dx.
\end{align*}
At this point we use $DW(x)=Dw(x)$ on $\Omega_k$ since $w=W-k$ there, in order to rewrite \eqref{11a} as
\begin{equation}\label{11}
\int_{\Omega_k} |Dw(x)|^p\;dx
\leq C\left(|\Omega_k|^{\frac{p}{t}-1}
\int_{\Omega_k} |Dw(x)|^p\;dx
+k^p\m(\Omega_k)\right).
\end{equation}
Our next aim is to move the first term above on the left and this could be done if $|\Omega_k|$ is small enough for large $k.$ This is just the case because of
$$
k^{p^*}|\Omega_k^\alpha|\leq \int_{\Omega_k^\alpha} |u^\alpha(x)|^{p^*}\;dx \leq \int_{\Omega} |\bu(x)|^{p^*}\;dx\leq C\|D\bu\|_{L^p(\Omega;\R^{N\times n})}^{p^*}
$$
and
$$
|\Omega_k|=\left| \bigcup_{\alpha=1}^N \Omega_k^\alpha\right|\leq \sum_{\alpha=1}^N |\Omega_k^\alpha|,
$$
that means there exists a $k_0,$ depending on known quantities and on $\|D\bu\|_{L^p(\Omega;\R^{N\times n})},$ such that if $k\geq k_0$ then
$|\Omega_k|$ can be done small enough to ensure $C|\Omega_k|^{\frac{p}{t}-1}$ in \eqref{11} is less than $1/2.$
Then \eqref{11} becomes
\begin{equation}\label{12}
\int_{\Omega_k} |Dw(x)|^{p}\;dx
\leq Ck^p\m(\Omega_k)\quad \forall\ k\geq k_0,
\end{equation}
and \eqref{3} rewrites into
\begin{equation}\label{13}
\int_{\Omega_k} w(x)\;d\m
\leq Ck\big(\m(\Omega_k)\big)^{1+\frac{\delta}{p(n-p+\delta)}}\quad \forall\ k\geq k_0,\quad \delta>0.
\end{equation}

The Cavalieri principle gives
$$
\int_{\Omega_k} w(x)\;d\m =\int_{\Omega_k} \big(W(x)-k\big)\;d\m=
\int_k^\infty \m(\Omega_t)\;dt
$$
with $\tau(t):=\m(\Omega_t),$ 
and \eqref{13} takes on the form
$$
\int_k^\infty \tau(t)\;dt\leq Ck\big(\tau(k))^{1+\nu}\quad \forall\ k\geq k_0,\ \nu=\dfrac{\delta}{p(n-p+\delta)}>0.
$$

It follows now by the Hartman--Stampacchia maximum principle (Lemma~\ref{lem3}) that there exists a number $k_{\max}$ such that
$$
\tau(k)=\m\big(\{x\in\Omega\colon\ W(x)>k\}\big)=0\quad \forall\ k\geq k_{\max},
$$
that is,
$$
W(x)\leq k_{\max}\quad \text{for a.a.}\ x\in\Omega
$$
which yields the desired bound \eqref{EST} in case $p<n.$

\bigskip

\textit{Case 2: $p=n.$} The claim \eqref{EST} follows immediately on the base of Lemma~\ref{lem4}, \eqref{A} and the Sobolev imbedding theorem. In fact, Lemma~\ref{lem4} yields 
$\bu\in W^{1,n+\varepsilon}(\Omega)$ for some $\varepsilon>0$ whence $\bu$ is even H\"older continuous in $\ol\Omega$ as consequence of \eqref{A} and the Morrey lemma. 

However, the result of Theorem~\ref{thm} holds true also in less regular domains with \textit{$p$-thick complements} (see \cite{BPS-arxiv}) when, in general, the measure density condition \eqref{A} fails, and for the sake of future references we will sketch the proof of \eqref{EST} also in the case $p=n.$
We will argue as above, taking into account that $p^*>n$ could be any number. Thus, we choose a $p'<n$ and close enough to $n,$ in a way that
$$
n^*=p^*=\dfrac{n^2}{(n-p')(n+1)}.
$$
Having in mind that ${p'}^*=\frac{np'}{n-p'},$ it is clear that
$$
n^*=p^*<{p'}^*\quad \text{and}\quad
\dfrac{n(n^*-1)}{n^*}=\dfrac{p'({p'}^*-1)}{{p'}^*}.
$$
Since the componentwise conditions \eqref{CWC}, \eqref{CWCG} and \eqref{SC} are essential for large values of $|\bz|$ and $|\bxi^\alpha|,$ assuming without loss of generality that $\varphi(x)\geq 1,$ we have for each $\alpha\in\{1,\ldots,N\}$ and all $|\bz|,|\bxi^\alpha|\geq 1$ that
\begin{align*}
|\bxi^\alpha|^n \geq&\ |\bxi^\alpha|^{p'},\\
|\bxi^\alpha|^\frac{n(n^*-1)}{n^*} =&\ |\bxi^\alpha|^\frac{{p'}({p'}^*-1)}{{p'}^*},\\
|\bz|^{n^*} \leq&\ |\bz|^{{p'}^*},\\
\varphi^\frac{n}{n-1}\leq&\ \varphi^\frac{p'}{p'-1}.
\end{align*}

Therefore, \eqref{CWC}, \eqref{CWCG} and \eqref{SC} take one the forms
\begin{align}\label{14}
\sum_{i=1}^n a^\alpha_i(x,\bz,\bxi)\xi^\alpha_i
 \geq&\ \varkappa |\bxi^\alpha|^{p'}-\Lambda |\bz|^{{p'}^*} -\Lambda \varphi^{\frac{{p'}}{{p'}-1}}(x),\\
\label{15}
\big|b^\alpha(x,\bz,\bxi)\big|\leq &\  \Lambda\Big(\psi(x)+|\bz|^{{p'}^*-1} +|\bxi^\alpha|^{\frac{{p'}({p'}^*-1)}{{p'}^*}}\Big),\\
\label{16}
b^\alpha(x,\bz,\bxi){.}\mathrm{sign\,}z^\alpha \geq &\ 
- \Lambda\Big(\psi(x)+|\bz|^{{p'}^*-1} +|\bxi^\alpha|^{\frac{{p'}({p'}^*-1)}{{p'}^*}}\Big),
\end{align}
respectively.

Define the measure
$$
d\overline{\m}:=
\left(\chi_\Omega(x)+\varphi^\frac{p'}{p'-1}(x)+\psi(x)+|\bu(x)|^{\frac{{p'}^2}{n-p'}}\right)dx
$$
for which
$$
\overline{\m}(B_\rho)\leq K \rho^{n-p'+\delta}
$$
with a positive $\delta,$ after increasing $p'<n,$ if necessary, to have
$$
r>\dfrac{p'}{p'-1},\quad p'>\dfrac{n-\lambda}{r}+1,\quad p'>\dfrac{n-\mu}{s}.
$$
With the function $w(x)$ and the set $\Omega_k,$ defined as in the previous case, it is clear that
$$
\int_{\{x\in\Omega_k\colon\ |Dw(x)|<1\}} |Dw(x)|^{p'}\;dx \leq |\Omega_k|\leq k^{p'}\overline{\m}(\Omega_k),\quad k\geq 1,
$$
whereas, using \eqref{14}, \eqref{15} and \eqref{16}, the energy
$$
\int_{\{x\in\Omega_k\colon\ |Dw(x)|\geq 1\}} |Dw(x)|^{p'}\;dx
$$
is estimated in the same manner as before, yielding finally
$$
\int_{\Omega_k} |Dw(x)|^{p'}\;dx
\leq C k^{p'} \overline{\m}(\Omega_k)\quad \forall\ k\geq k_0
$$
(cf. \eqref{12}). 

The desired bound \eqref{EST} follows as in \textit{Case 1} through the Hartman--Stampacchia maximum principle, and this completes the
proof of Theorem~\ref{thm}.\hfill $\qed$

\section{Morrey regularity of the gradient}\label{sec4}

As already mentioned before, the weak solution of \eqref{P} is globally H\"older continuous in $\Omega$ when $p=n$ since $D\bu\in L^{n+\varepsilon}(\Omega;\R^{N\times n})$ as consequence of the higher gradient integrability result, Lemma~\ref{lem4}. Invoking the inclusion properties of the Morrey spaces (\cite{Pic}), it is not hard to see that the gradient $D\bu$ of the weak solution possesses some Morrey regularity, namely $D\bu\in L^{n,n-\frac{n^2}{n+\varepsilon}}(\Omega;\R^{N\times n}).$

It turns out that, thanks to the hypotheses \eqref{M}, this property is own by the gradient of the bounded solution also when $p<n$ under the general controlled growth \eqref{CG} and
coercivity \eqref{CC} requirements on the system, without any componentwise restrictions.
\begin{thm}\label{thm2}
Assume $p\in (1,n),$ \eqref{A}, \eqref{CG}, \eqref{M} and \eqref{CC}, and let $\bu \in  L^{\infty}(\Omega;\R^N)\cap W^{1,p}_0(\Omega;\R^N)$ be a {\em bounded} weak solution of \eqref{P}.

Then $D\bu\in L^{n,n-p}(\Omega;\R^{N\times n}).$
 Precisely, 
\begin{equation}\label{MORREY}
\int_{B_\rho(x_0)\cap\Omega} |D\bu(x)|^p\;dx
\leq C \rho^{n-p}\quad \forall x_0\in \ol\Omega,\ \forall \rho\in(0,\mathrm{diam}\,\Omega)
\end{equation}
with a constant $C$ depending on known quantities and on $M=\|\bu\|_{L^{\infty}(\Omega;\R^N)}.$
\end{thm}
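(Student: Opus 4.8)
The plan is to establish the Morrey estimate \eqref{MORREY} by a standard Caccioppoli-type argument, exploiting that $\bu$ is now known to be bounded so that all the "bad" powers of $|\bu|$ become harmless constants. Fix $x_0\in\ol\Omega$ and $\rho\in(0,\mathrm{diam}\,\Omega)$, and let $\eta\in C_0^\infty(B_{2\rho}(x_0))$ be a cutoff with $\eta\equiv1$ on $B_\rho(x_0)$, $0\le\eta\le1$, $|D\eta|\le C/\rho$. Extend $\bu$ by zero outside $\Omega$; this is legitimate because $\bu\in W_0^{1,p}(\Omega;\R^N)$. Use $\bv=\eta^p\bu$ as a test function in the weak formulation \eqref{WS} (componentwise $v^\alpha=\eta^pu^\alpha$), so that $D_iv^\alpha=\eta^pD_iu^\alpha+p\eta^{p-1}D_i\eta\,u^\alpha$. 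The coercivity \eqref{CC} applied after summing over $\alpha$ gives a lower bound
\begin{equation*}
\int \eta^p\bA(x,\bu,D\bu)\cdot D\bu\,dx
\ge \varkappa\int\eta^p|D\bu|^p\,dx-\Lambda\int\eta^p\big(|\bu|^{p^*}+\varphi^{\frac{p}{p-1}}\big)dx,
\end{equation*}
while the remaining terms — the $p\eta^{p-1}D_i\eta\,u^\alpha$ contribution from the principal part and the full lower-order integral $\int\eta^pb^\alpha u^\alpha$ — are estimated from above using the controlled growths \eqref{CG}.

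The key point in the estimates is that every occurrence of $|\bu|$ is bounded by $M$, so e.g. $|\bu|^{p^*}\le M^{p^*}$, $|\bu|^{\frac{p^*(p-1)}{p}}\le M^{\frac{p^*(p-1)}{p}}$, $|\bu|^{p^*-1}\le M^{p^*-1}$; these contribute terms of the form $C(M)\int_{B_{2\rho}}(1+\varphi^{\frac{p}{p-1}}+\varphi+\psi)\,dx$, and by the Morrey hypotheses \eqref{M} — exactly the bounds already computed in Case 1 of the proof of Theorem~\ref{thm}, namely $\int_{B_\rho}\varphi^{\frac{p}{p-1}}\le C\rho^{n-p+\delta_1}$ and $\int_{B_\rho}\psi\le C\rho^{n-p+\delta_2}$ with $\delta_1,\delta_2>0$, together with $\int_{B_\rho}1\,dx\le C\rho^n$ — all such terms are $\le C\rho^{n-p}$ (for $\rho$ bounded by $\mathrm{diam}\,\Omega$). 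The terms carrying the true gradient growth, $|D\bu|^{p-1}$ from $\bA$ and $|D\bu|^{\frac{p(p^*-1)}{p^*}}$ from $\bb$, are absorbed by Young's inequality: for the principal part, $|D\bu|^{p-1}\cdot\eta^{p-1}|D\eta||\bu|\le\varepsilon\eta^p|D\bu|^p+C(\varepsilon)M^p|D\eta|^p$, and $\int_{B_{2\rho}}|D\eta|^p\le C\rho^{-p}\cdot\rho^n=C\rho^{n-p}$; for the lower-order part, $|D\bu|^{\frac{p(p^*-1)}{p^*}}\cdot\eta^pM\le\varepsilon\eta^p|D\bu|^p+C(\varepsilon)M^{p/(p-p^*+\ldots)}$ — since the exponent $\frac{p(p^*-1)}{p^*}<p$, Young's inequality with this subcritical exponent produces only a bounded constant times $\rho^n\le C\rho^{n-p}$, with no leftover gradient term of order $p$ beyond the absorbable $\varepsilon$-piece.

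Choosing $\varepsilon$ small relative to $\varkappa$ absorbs all the $\varepsilon\int\eta^p|D\bu|^p$ terms into the left side, leaving
\begin{equation*}
\int_{B_\rho(x_0)\cap\Omega}|D\bu|^p\,dx\le\int\eta^p|D\bu|^p\,dx\le C(M,\text{known quantities})\,\rho^{n-p},
\end{equation*}
which is precisely \eqref{MORREY}; the conclusion $D\bu\in L^{n,n-p}(\Omega;\R^{N\times n})$ follows since the Morrey norm is the supremum of $\rho^{-(n-p)}\int_{B_\rho\cap\Omega}|D\bu|^p\,dx$ over $x_0$ and $\rho$. I expect the only genuine subtlety — and thus the main obstacle — to be the bookkeeping around the subcritical lower-order gradient term: one must check that applying Young's inequality to $|D\bu|^{\frac{p(p^*-1)}{p^*}}$ (with conjugate exponents $\frac{p^*}{p^*-1}$ and its dual) against the bounded factor indeed yields only a locally integrable constant and no hidden $\rho$-dependence worse than $\rho^n$; everything else is the routine Caccioppoli machinery combined with the Morrey decay estimates already verified in Section~3. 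No higher-integrability input (Lemma~\ref{lem4}) or trace inequality (Lemma~\ref{lem2}) is needed here — boundedness of $\bu$ does all the work that the measure $\m$ did in the proof of Theorem~\ref{thm}.
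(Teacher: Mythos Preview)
Your proof is correct and follows essentially the same strategy as the paper's: a Caccioppoli-type inequality obtained by testing with a cutoff-localized function, using the boundedness $|\bu|\le M$ to render all powers of $|\bu|$ harmless, and then invoking the Morrey hypotheses \eqref{M} on $\varphi$ and $\psi$ to get the $\rho^{n-p}$ decay.

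The one noteworthy difference is your choice of test function. You take $v^\alpha=\eta^p u^\alpha$, whereas the paper uses $v^\alpha=\e^{u^\alpha}\zeta^p$ (and, near the boundary, $(\e^{|u^\alpha|}-1)\zeta\chi_\Omega\,\mathrm{sign}\,u^\alpha$). Your choice is actually the cleaner one under the stated hypothesis \eqref{CC}: since the factor $\eta^p$ multiplying $D_iu^\alpha$ is independent of $\alpha$, the general coercivity \eqref{CC} applies directly to the sum $\sum_\alpha\bA^\alpha\cdot D\bu^\alpha$. With the paper's exponential test function the corresponding factor $\e^{u^\alpha}$ varies with $\alpha$, and bounding $\sum_\alpha \e^{u^\alpha}\bA^\alpha\cdot D\bu^\alpha$ from below by $\e^{-M}(\varkappa|D\bu|^p-\cdots)$ is not an immediate consequence of \eqref{CC} alone (it would follow from the componentwise coercivity \eqref{CWC}). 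Your test function also handles the interior and boundary cases uniformly, since $\eta^p\bu\in W^{1,p}_0(\Omega;\R^N)$ automatically. The exponential test function is the natural choice when the lower-order term has \emph{critical} gradient growth $|D\bu|^p$; here the growth $|D\bu|^{p(p^*-1)/p^*}$ is strictly subcritical, so the simpler linear test function suffices, exactly as you observe.
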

\begin{proof}
Fix a point $x_0\in\Omega$ and consider a positive cut-off function $\zeta\in C^\infty_0(B_{2\rho}(x_0))$ such that $\zeta\equiv1$ on $B_\rho(x_0)$ and $|D\zeta|\leq 1/\rho.$

Suppose $B_{2\rho}\Subset\Omega$ and consider the  function $\bv\in W^{1,p}_0(\Omega;\R^N)$ with components $v^\alpha(x)=\e^{u^\alpha(x)}\zeta^p(x).$ Employing $\bv$ as a test function in \eqref{WS} gives
\begin{align}\label{N1}
&\sum_{\alpha=1}^N\sum_{i=1}^n
\int_{\Omega}
a_i^\alpha\big(x,\bu(x),D\bu(x)\big)\e^{u^\alpha(x)}\zeta^p(x) D_iu^\alpha(x)\;dx\\
\nonumber
&\qquad\qquad
+p\sum_{\alpha=1}^N\sum_{i=1}^n
\int_{\Omega}
a_i^\alpha\big(x,\bu(x),D\bu(x)\big)\e^{u^\alpha(x)}\zeta^{p-1}(x) D_i\zeta(x)\;dx\\
\nonumber
&\qquad\qquad
+ \sum_{\alpha=1}^N
\int_{\Omega}
b^\alpha\big(x,\bu(x),D\bu(x)\big)\e^{u^\alpha(x)}\zeta^p(x) \;dx=0.
\end{align}

We will estimate the three integrands above, keeping in mind $|u^\alpha(x)|\leq M$ for a.a. $x\in\Omega.$ Thus, the coercivity condition \eqref{CC} yields
\begin{align*}
&\sum_{\alpha=1}^N\sum_{i=1}^n
a_i^\alpha\big(x,\bu(x),D\bu(x)\big)\e^{u^\alpha(x)}\zeta^p(x) D_iu^\alpha(x)\\
&\qquad\qquad \geq \e^{-M}\zeta^p(x) \left(
\varkappa |D\bu(x)|^p-\Lambda M^{p^*}-\Lambda \varphi^{\frac{p}{p-1}}(x)\right)\\
&\qquad\qquad \geq C\zeta^p(x) \left(
 |D\bu(x)|^p-1- \varphi^{\frac{p}{p-1}}(x)\right)\quad \text{for a.a.}\ x\in\Omega.
\end{align*}
Further on, the controlled growth assumptions $\eqref{CG}$ and the Young inequality imply
\begin{align*}
&\big|a_i^\alpha\big(x,\bu(x),D\bu(x)\big)\e^{u^\alpha(x)}\zeta^{p-1}(x) D_i\zeta(x)\big|\\
&\qquad\qquad
\leq \e^M \zeta^{p-1}(x)|D\zeta(x)|\left(\varphi(x)+M^{\frac{p^*(p-1)}{p}}+|D\bu(x)|^{p-1}\right)\\
&\qquad\qquad \leq 
\varepsilon \zeta^p(x)|D\bu(x)|^p +\varepsilon\zeta^p(x)\left(1+\varphi^{\frac{p}{p-1}}(x)\right)+C(\varepsilon)|D\zeta(x)|^p\quad \text{for a.a.}\ x\in\Omega
\end{align*}
and
\begin{align*}
&\big|b^\alpha\big(x,\bu(x),D\bu(x)\big)\e^{u^\alpha(x)}\zeta^{p}(x)\big|\\
&\qquad\qquad
\leq \e^M \zeta^{p}(x)\left(\psi(x)+M^{p^*-1}+|D\bu(x)|^{\frac{p(p^*-1)}{p^*}}\right)\\
&\qquad\qquad \leq 
\varepsilon \zeta^p(x)|D\bu(x)|^p +C(\varepsilon)\zeta^p(x)\big(1+\psi(x)\big)\quad \text{for a.a.}\ x\in\Omega.
\end{align*}
Employing these bounds in \eqref{N1}, choosing appropriately $\varepsilon>0$ and taking into account that $\zeta(x)=1$ on $B_\rho,$ we get
$$
\int_{B_\rho} |D\bu(x)|^p\;dx \leq \left(
\int_{B_{2\rho}} |D\zeta(x)|^p\;dx+
\int_{B_{2\rho}} \left(1+\varphi^{\frac{p}{p-1}}(x)+\psi(x)\right)\;dx\right).
$$
It is clear that
$$
\int_{B_{2\rho}} |D\zeta(x)|^p\;dx\leq C \rho^{n-p}
$$
in view of $|D\zeta|\leq 1/\rho,$ while 
$$
\int_{B_{2\rho}} \left(1+\varphi^{\frac{p}{p-1}}(x)+\psi(x)\right)\;dx\leq C\left(
\rho^n+\rho^{n-\frac{p(n-\lambda)}{r(p-1)}}+\rho^{n-\frac{n-\mu}{s}}\right)
$$
because of \eqref{M}. Furthermore, the requirements $(p-1)r+\lambda>n$ and $ps+\mu>n$ ensure
$$
n-\frac{p(n-\lambda)}{r(p-1)}>n-p\quad \text{and}\quad n-\frac{n-\mu}{s}>n-p,
$$
whence
$$
\int_{B_\rho} |D\bu(x)|^p\;dx
\leq C \rho^{n-p}.
$$
In case $B_{2\rho}\not\Subset\Omega,$ we use in \eqref{WS} a test function with components
$$
v^\alpha(x)=\left(\e^{|u^\alpha(x)|}-1\right)\zeta(x)\chi_\Omega(x)\,\text{sign}\,u^\alpha(x),
$$
and the measure density condition \eqref{A} together with similar arguments as above lead to the claim \eqref{MORREY}.
\end{proof}

\end{document}